\newcommand{\bC}{{\Bbb C}}
\newcommand{\bP}{{\Bbb P}}
\newcommand{\bZ}{{\Bbb Z}}
\newcommand{\bR}{{\Bbb R}}
\newcommand{\bN}{{\Bbb N}}
\newcommand{\bA}{{\Bbb A}}
\newcommand{\spec}{{\rm{Spec\ }}}
\newcommand{\ord}{{\rm{ord}}}
\newcommand{\ok}{{\overline{K}}}
\renewcommand{\oe}{{\overline{E}}}
\newcommand{\oa}{{\overline{a}}}
\newcommand{\ini}{{\rm in}}
\renewcommand{\o}[0]{{\mathcal O}}
\renewcommand{\a}{{\frak{a}}}
\renewcommand{\b}{{\frak{b}}}
\newcommand{\mld}{{\rm{mld}}}
\newcommand{\we}{{\widetilde{E}}}
\newcommand{\m}{{\frak{m}}}
\renewcommand{\char}{{\rm{char\ }}}
\newcommand{\mod}{{\rm{mod}\ }}
\def\to {\longrightarrow}
\newtheorem{thm}{Theorem}[section]
\newtheorem{lem}[thm]{Lemma}
\newtheorem{cor}[thm]{Corollary}
\newtheorem{prop-def}[thm]{Proposition-Defintion}
\theoremstyle{definition}
\newtheorem{defn}[thm]{Definition}
\newtheorem{exmp}[thm]{Example}
\newtheorem{conj}[thm]{Conjecture}
\newtheorem{rem}[thm]{Remark}
\title{A bound of the number of weighted blow-ups to compute \\
the minimal log discrepancy
for smooth 3-folds}
\author{Shihoko Ishii}
\begin{document}
\date{}

\maketitle
\footnote{The author is partially supported by JSPS 19K03428}

\begin{abstract}
\noindent     
We study a pair consisting of a smooth 3-fold defined over an algebraically closed field 
    and a ``general'' $\bR$-ideal.
    We show that the minimal log discrepancy (``mld" for short) of
     every such a pair 
    is computed by a prime divisor obtained by at most two weighted blow-ups.
    This  bound is regarded as a weighted blow-up version of Musta\c{t}\v{a}-Nakamura's conjecture.
    We also show that if the mld of such a pair is not less than 1, then it is computed by at most one weighted blow-up.
    As a consequence, ACC of mld holds for such pairs.
\end{abstract}

\section{Introduction}

\noindent
Throughout this paper, the base field $k$ of varieties is an algebraically closed field of
arbitrary characteristic.
We study  pairs $(A,\a)$ consisting of a smooth variety $A$ of dimension $N>1$ and 
a ``$\bR$-ideal" $\a$ which means $\a=\a_1^{e_1}\cdots\a_r^{e_r}$,
where $\a_i$'s are non-zero coherent ideal sheaves on $A$ and $e=(e_1,\ldots,e_r)\in {\bR^r_{>0}}$.
We fix a closed point $0\in A$.

The minimal log discrepancy (``mld" for short) $\mld(0; A,\a)$ is an important invariant to measure 
the singularity of the pair $(A,\a)$ at 0 and plays important roles in birational geometry.
We consider every prime divisor over $A$ with the center at 0 and construct a ``good model"
of the divisor to approximate the mld.
The prototype is as follows:

\begin{thm}[\cite{kawk1},\cite{ip}]\label{prototype} Assume $N=2$.
      For every prime divisor $E$ over $A$ with the center at $0$, there exists a prime divisor
      $F$  obtained by one weighted blow-up with the center at $0$ satisfying
      $$a(E; A,\a)\geq a(F;A,\a),$$
      for every $\bR$-ideal $\a$ such that $a(E; A,\a)\geq0$.
\end{thm}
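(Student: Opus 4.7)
The plan is to construct $F$ as a weighted blow-up of $A$ at $0$, choosing both the weights and the coordinate system adapted to $v_E$. Fix regular parameters $x_1,x_2$ at $0$ and set $w_i:=v_E(x_i)$, which may be taken to be positive integers. Let $F$ be the exceptional divisor of the weighted blow-up $\pi\colon \tilde A\to A$ with weights $w=(w_1,w_2)$; then $k_F=w_1+w_2-1$ and $v_F$ is the monomial valuation with weights $w$ on $\o_{A,0}$. For any $f=\sum c_{\alpha\beta}x_1^\alpha x_2^\beta$ one has $v_E(f)\geq v_F(f)=\min\{\alpha w_1+\beta w_2:c_{\alpha\beta}\neq 0\}$, and hence $v_E(\a)\geq v_F(\a)$ for every real ideal $\a$; a Jacobian computation also yields $k_E\geq k_F$. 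Both inequalities point the same way, so the desired $a(E;A,\a)\geq a(F;A,\a)$ amounts to the refinement $k_E-k_F\geq v_E(\a)-v_F(\a)$.

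The key step is a judicious choice of coordinates. Using the structure of divisorial valuations on a two-dimensional regular local ring (Maclane--Vaqui\'e key polynomials, or equivalently the Zariski generating sequence of $v_E$), one iteratively performs polynomial changes of the form $x_i\mapsto x_i+g_i(x_1,x_2)$ to arrive either at coordinates in which $v_E$ is monomial, so that $v_E=v_F$ and $k_E=k_F$ and the conclusion is immediate, or at coordinates in which $v_E$ differs from $v_F$ only on functions whose initial form is under control. To handle the latter, one invokes the transformation formula
\[
a(E;A,\a)-a(F;A,\a) = \bigl[a(E;\tilde A,\tilde\a)-1\bigr] + (v_E(F)-1)\bigl(a(F;A,\a)-1\bigr),
\]
where $\tilde\a$ is the strict transform of $\a$ on $\tilde A$, and induct on the ``complexity'' of $v_E$---for instance on the length of its key-polynomial sequence, or equivalently on the number of further blow-ups needed to realize $v_E$ over $\tilde A$. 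The base case is $v_E=v_F$, and in the inductive step one reduces to the pair $(\tilde A,p)$ at the closed point $p\in F$ that is the center of $v_E$ on $\tilde A$.

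The main obstacle is the combined regime $a(F;A,\a)<1$ and $a(E;\tilde A,\tilde\a)<1$, where both terms on the right of the displayed formula are negative. Here the hypothesis $a(E;A,\a)\geq 0$ must be used directly to force the inequality $k_E-k_F\geq v_E(\a)-v_F(\a)$; the argument exploits two features special to dimension two, namely that $\tilde A$ is locally a smooth surface (up to passing to a toric chart, the relevant singularities being quotient and hence harmless for discrepancy comparisons) and that the key-polynomial sequence of any divisorial valuation on $k[[x_1,x_2]]$ is finite, so the induction terminates. The failure of both properties in higher dimension is what forces the iterated weighted blow-ups analyzed in the body of the paper.
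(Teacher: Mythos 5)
Your setup---taking $F$ to be the exceptional divisor of the weighted blow-up with weights $w_i=v_E(x_i)$, the log-discrepancy transformation formula, and the identification of the hard regime where both terms on its right-hand side are negative---does match the strategy of the cited proofs (which the paper does not reproduce for Theorem \ref{prototype} itself, but reruns in Case 1 of the proof of Theorem \ref{main} over the surface $A_{1\ok}$). However, the proposal has two genuine gaps. First, the coordinates cannot be chosen as loosely as you describe: the statement is false for a general choice of regular parameters, and the precise normalization needed is that $\{x_1,x_2\}$ be a \emph{standard system}, i.e.\ $v_E(x_1)$ minimal and $v_E(x_2)$ maximal among $\{v_E(x)\mid x\in\m_{A,0}\setminus\m_{A,0}^2\}$ (the maximum exists by Zariski's subspace theorem). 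For instance, let $v_E$ be the monomial valuation of weight $(1,3)$ in coordinates $(x,y)$ and take $x_1=x$, $x_2=y+x^2$; then $w=(1,2)$, and for $\a=(y)^{4/3}$ one computes $a(E;A,\a)=4-4=0$ while $a(F;A,\a)=3-8/3=1/3$, so the desired inequality fails even though $a(E;A,\a)\geq 0$. Your iterative coordinate changes do not visibly enforce the maximality of $v_E(x_2)$, and the claim that one reaches coordinates where ``$v_E$ differs from $v_F$ only on functions whose initial form is under control'' is exactly what has to be proved.

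Second, in the regime you correctly flag as the main obstacle you assert the conclusion rather than prove it. The actual argument is: if $a(F;A,\a)>a(E;A,\a)\geq0$, then $a(E;\widetilde A,I_F\cdot\a_{\widetilde A})<0$, so by inversion of adjunction $\ord_P(\a_{\widetilde A}\o_F)>1$ at the center $P$ of $E$ on $\widetilde A$; since $P$ avoids the coordinate points of $F=\bP(w'_1,w'_2)$ (the content of Lemma \ref{2nd}), a Bezout-type bound on the weighted projective line gives $\ord_P(\a_{\widetilde A}\o_F)\leq v_F(\a)/(w'_1w'_2)\leq (w'_1+w'_2)/(w'_1w'_2)$, the last step using $a(F;A,\a)\geq 0$; this forces $w'=(1,s)$, and then the point $P=(c,1)$ yields $x_2-cx_1^{s}$ with $v_E$-value strictly larger than $v_E(x_2)$, contradicting standardness. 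None of this chain appears in your proposal, and the proposed induction on key-polynomial length cannot substitute for it: applying the inductive hypothesis at $(\widetilde A,p)$ produces a divisor obtained by a weighted blow-up of $\widetilde A$, hence two blow-ups from $A$, which is not what the theorem asserts---and the failure of exactly this reduction in dimension $3$ is why the paper only obtains the two-blow-up statement of Theorem \ref{main}.
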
      
          The inequality in the theorem implies that $F$ is a better divisor  to approximate the $\mld$.
       Therefore the theorem states that every prime divisor over $A$ with the center at 0 has a better divisor which  is
     obtained in a simple procedure.
     Here, we note that $F$ is constructed from $E$ and does not depend 
     on the choice of an $\bR$-ideal $\a$.

     Actually, in the paper \cite{kawk1}and \cite{ip}, the main theorem is not stated
in this form, 
but its proof shows  Theorem \ref{prototype}.
     The paper \cite{kawk1} is for $\char k=0$, 
     and the paper \cite{ip} is for
     $\char k=p>0$ and
     the main statements of both papers are in the following form:

\begin{cor}[\cite{kawk1},\cite{ip}]\label{kawcor} Assume $N=2$.
    Then, for every pair $(A,\a)$, the minimal log discrepancy $\mld(0;A,\a)$ is
    computed by a prime divisor obtained by one weighted blow-up.

\end{cor}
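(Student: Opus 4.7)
The plan is to deduce the corollary almost directly from Theorem \ref{prototype}, once the infimum defining the mld is known to be attained by a single prime divisor.

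First I would handle the main case $\mld(0;A,\a)\geq 0$. In dimension two, the log discrepancies of prime divisors over $A$ with center at $0$ take only finitely many values below any fixed bound (seen by pulling everything back to a log resolution of $(A,\a)$), so the infimum defining the mld is in fact a minimum: there exists a prime divisor $E$ over $A$ with center at $0$ and $a(E;A,\a)=\mld(0;A,\a)\geq 0$. Applying Theorem \ref{prototype} to $E$ yields a prime divisor $F$, obtained by one weighted blow-up, with $a(F;A,\a)\leq a(E;A,\a)=\mld(0;A,\a)$. Combined with the reverse inequality $a(F;A,\a)\geq\mld(0;A,\a)$ built into the definition of mld, this forces equality, so $F$ computes the mld.

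Next I would dispose of the non-log canonical case $\mld(0;A,\a)=-\infty$ by a scaling argument. By definition there is a prime divisor $E$ with center at $0$ and $a(E;A,\a)<0$; I would also assume that $\a$ vanishes at $0$, since otherwise $\mld(0;A,\a)=2$ is trivially computed by the blow-up of $0$. The quantity $a(E;A,\a^t)$ is affine in $t$, positive at $t=0$ and negative at $t=1$, so there exists $t\in(0,1)$ with $a(E;A,\a^t)=0$. Applying Theorem \ref{prototype} to the pair $(A,\a^t)$ provides a weighted blow-up divisor $F$ with $a(F;A,\a^t)\leq 0$. Since $\ord_F(\a)>0$ (as the center of $F$ is at $0$), the identity $a(F;A,\a)=a(F;A,\a^t)-(1-t)\ord_F(\a)$ gives $a(F;A,\a)<0$, so $F$ witnesses $\mld(0;A,\a)=-\infty$ in the extended sense.

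The main obstacle is very mild: the entire geometric content lives inside Theorem \ref{prototype}, and the only additional ingredients needed are the standard finiteness of small log discrepancies on a two-dimensional log resolution (for the first case) and the elementary rescaling of the real ideal (for the second). I expect the proof in the paper to do essentially this, perhaps packaging the two cases uniformly.
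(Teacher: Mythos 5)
Your proposal is correct and follows essentially the same route as the paper: for $\mld\geq 0$ apply Theorem \ref{prototype} to a divisor $E$ attaining the minimum, and for $\mld=-\infty$ rescale the ideal to $\a^t$ with $a(E;A,\a^t)=0$ before applying the theorem, exactly as in the paper's proof of Corollary \ref{maincor}. Your added details (attainment of the infimum via a log resolution, and the check that $v_F(\a)>0$ so that $a(F;A,\a)<0$ after rescaling) are the right ones and are left implicit in the paper.
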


   The corollary follows from the theorem immediately.
  See, for example, the proof of Corollary \ref{maincor} in the section 5.

   When we consider the case $N=3$, we can see that one weighted blow-up is not sufficient
    to obtain a prime divisor computing the mld (see Example \ref{not1}).
   On the other hand,  in the example we can also show that the mld is computed 
    by a prime divisor obtained by two weighted blow-ups.
   So it is natural to expect the following conjecture:

\begin{conj}\label{conj} Assume $N\geq3$.
      For every prime divisor $E$ over $A$ with the center at $0$, there exists a prime divisor
      $F$  centered at $0$ obtained by at most $N-1$ weighted blow-ups satisfying
      $$a(E; A,\a)\geq a(F;A,\a),$$
      for every $\bR$-ideal $\a$ such that $a(E; A,\a)\geq0$.
\end{conj}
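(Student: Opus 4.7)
The natural plan is to prove the conjecture by induction on $N$, with the base case $N = 2$ supplied by Theorem \ref{prototype}. For the inductive step, I would begin by extracting the numerical data of the given divisor $E$: fix a regular system of parameters $x_1, \ldots, x_N$ of $\mathcal{O}_{A,0}$ and set $a_i = v_E(x_i)$. Perform the weighted blow-up $\pi_1 \colon Y_1 \to A$ at $0$ with weights $(a_1, \ldots, a_N)$, and let $F_1$ denote its exceptional divisor. Then $k_{F_1} + 1 = a_1 + \cdots + a_N$, and for every $f \in \mathcal{O}_{A,0}$ one has $v_E(f) \geq v_{F_1}(f)$; in particular $v_E(\a) \geq v_{F_1}(\a)$.

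If $v_E$ agrees with the monomial valuation $v_{F_1}$, then $E = F_1$ and we are done with a single weighted blow-up. Otherwise, the strict transform $\widetilde E$ of $E$ on $Y_1$ has a proper center $Z \subsetneq F_1$, and the plan is to apply the inductive hypothesis to $\widetilde E$ viewed as a divisor over the regular local ring $\mathcal{O}_{Y_1, \eta}$ of dimension $N - \dim Z \leq N - 1$ at the generic point $\eta$ of $Z$, working on a smooth local chart of $Y_1$. The inductive hypothesis (or Theorem \ref{prototype} when $N = 3$ and $\dim Z = 1$) yields a prime divisor $F$ over $Y_1$ obtained by at most $N - 2$ further weighted blow-ups and satisfying $a(\widetilde E; Y_1, \widetilde \a) \geq a(F; Y_1, \widetilde \a)$, where $\widetilde \a := \a \cdot \mathcal{O}_{Y_1}\bigl(v_{F_1}(\a) F_1\bigr)$ is the residual ideal on $Y_1$. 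Combined with $K_{Y_1/A} = k_{F_1} F_1$ and the factorization $\a \cdot \mathcal{O}_{Y_1} = \mathcal{O}_{Y_1}\bigl(-v_{F_1}(\a) F_1\bigr) \cdot \widetilde \a$, this translates into $a(E; A, \a) \geq a(F; A, \a)$, with $F$ realized by at most $1 + (N - 2) = N - 1$ weighted blow-ups on $A$.

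The principal obstacle lies in executing the inductive step when $Y_1$ is singular. Since $Y_1$ has cyclic quotient singularities along the strata of $F_1 \cong \mathbb{P}(a_1, \ldots, a_N)$, the center $Z$ may lie inside the singular locus, and applying the inductive hypothesis requires first passing to a smooth toric cover of $Y_1$ and then descending the weighted blow-ups performed upstairs in a way that respects the bound $N - 1$. A second difficulty is verifying the hypothesis $a(\widetilde E; Y_1, \widetilde \a) \geq 0$ needed to invoke the induction: it ought to follow from $a(E; A, \a) \geq 0$ via the identity $a(E; A, \a) = a(\widetilde E; Y_1, \widetilde \a)$, but this requires careful bookkeeping of the discrepancy and multiplicity contributions of $F_1$, and already in dimension two is the nontrivial heart of Theorem \ref{prototype}. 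Finally, the bound $N - 1$ is only tight under favorable dimension counts for $Z$; I expect that the "general" real-ideal hypothesis used in the 3-fold results of the present paper enters precisely to rule out degenerate configurations in which $Z$ sits along a singular stratum of $F_1$ or has larger than expected dimension, where the induction would otherwise break down and force additional weighted blow-ups beyond the conjectured bound.
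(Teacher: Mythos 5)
The statement you are addressing is presented in the paper as an \emph{open conjecture}: the paper proves only the case $N=3$, and only under the extra hypothesis that $\a$ is ``general'' for $E$ (Theorem \ref{main}). Example \ref{3times} exhibits a non-general ideal for which the natural strategy of composing standard weighted blow-ups fails, so any proof of the full conjecture must allow the later weighted blow-ups to depend on $\a$ in a way that is not yet available. Your induction sketch therefore cannot close the statement, and beyond the obstacles you flag yourself it contains concrete errors.

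First, your discrepancy bookkeeping is wrong: the identity $a(E;A,\a)=a(\widetilde E;Y_1,\widetilde\a)$ does not hold. The correct decomposition (equation (\ref{expression}) in the paper) is
$$a(E;A,\a)=a(E;Y_1,I_{F_1}\cdot\widetilde\a)+v_E(F_1)\cdot a(F_1;A,\a),$$
so the residual pair on $Y_1$ must carry the ideal $I_{F_1}$ of the exceptional divisor, and the extra term $v_E(F_1)\,a(F_1;A,\a)$ must be controlled; the content of Lemma \ref{E1} is precisely to show, using generality of $\a$ and the Bezout-type Lemma \ref{bezout}, that when $a(F_1;A,\a)>a(E;A,\a)$ the weight is forced to be $(1,1,n)$ or $(2,2,3)$ and $0\le a(F_1;A,\a)<1$, which is what makes the second step possible. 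Second, your dimension count collapses when the center $Z$ of $E$ on $Y_1$ is a closed point: then $\dim\o_{Y_1,\eta}=N$, not $\le N-1$, so your induction would yield $N$ blow-ups rather than $N-1$; the paper's Case 2 handles this situation for $N=3$ by a separate numerical contradiction, not by induction. Third, even when $Z$ is a curve the residue field at $\eta$ is not algebraically closed, so Theorem \ref{prototype} does not apply directly; the paper needs Proposition \ref{correspondence} and Lemma \ref{ext.standard} to pass to $A_{\ok}$ and to verify that a standard system remains standard there. Finally, taking an arbitrary RSP with weights $v_E(x_i)$ is not adequate for the first blow-up: the argument requires a \emph{standard} system, whose coordinates realize the minimum and maximum of $v_E$ on $\m_{A,0}\setminus\m_{A,0}^2$, and several key steps (the exclusion of bad curves, the inequality (\ref{oftenuse})) use exactly this extremality.
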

As an immediate consequence of the conjecture, we obtain the following:

\begin{conj}[Corollary of Conjecture \ref{conj}]\label{corconj} Assume $N\geq3$.
    Then, for every pair $(A,\a)$, the minimal log discrepancy $\mld(0;A,\a)$ is
    computed by a prime divisor obtained by at most $N-1$ weighted blow-ups.
\end{conj}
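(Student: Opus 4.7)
The plan is to deduce Conjecture~\ref{corconj} from Conjecture~\ref{conj} in exactly the manner in which Corollary~\ref{kawcor} is deduced from Theorem~\ref{prototype}. The observation driving the argument is that Conjecture~\ref{conj} produces, starting from any prime divisor $E$ over $A$ with center at $0$ and with $a(E;A,\a)\ge 0$, a better prime divisor $F$ obtained by at most $N-1$ weighted blow-ups (which is $2$ when $N=3$) and satisfying $a(F;A,\a)\le a(E;A,\a)$. So the task reduces to combining this with the existence of a divisor realizing the mld.

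First I would reduce to the case $\mld(0;A,\a)\ge 0$. If the pair is not log canonical, then $\mld(0;A,\a)=-\infty$, and the statement may either be read as vacuous or be handled by perturbing the exponents $e_i$ by a small $\varepsilon>0$ so that the perturbed pair becomes log canonical, applying the $\mld\ge 0$ case to the perturbation, and letting $\varepsilon\to 0$ along a suitable limit of divisors. Next, invoking the standard fact that for a pair of a smooth variety with a real ideal the infimum over prime divisors centered at $0$ is actually attained, I would pick a prime divisor $E_0$ with $a(E_0;A,\a)=\mld(0;A,\a)\ge 0$.

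Then I would apply Conjecture~\ref{conj} to $E_0$ with respect to the given real ideal $\a$, producing a prime divisor $F$ obtained by at most $N-1$ weighted blow-ups with $a(F;A,\a)\le a(E_0;A,\a)=\mld(0;A,\a)$. Because $F$ is itself a prime divisor over $A$ with center at $0$, the definition of mld forces the reverse inequality $a(F;A,\a)\ge \mld(0;A,\a)$, so equality holds and $F$ computes the mld. Specializing to $N=3$ yields the advertised bound of at most two weighted blow-ups.

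The main obstacle is not this derivation, which is essentially formal, but rather its inputs. The genuinely substantial input is Conjecture~\ref{conj} itself, whose verification in the case $N=3$ is the real content of the paper and will require a delicate structural analysis of divisorial valuations under one or two successive weighted blow-ups together with careful comparisons of log discrepancies. The existence of an mld-computing divisor is, by contrast, a standard matter for smooth pairs with real ideals, possibly via a limiting/boundedness argument on weights once one restricts to divisors with $a(\,\cdot\,;A,\a)\le \mld+\varepsilon$. As the author indicates, the corollary-style derivation above is exactly what Section~5 will spell out in the proof of Corollary~\ref{maincor}.
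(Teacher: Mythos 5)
Your derivation in the case $\mld(0;A,\a)\ge 0$ is exactly the paper's (it is the proof of Corollary \ref{maincor}): pick a prime divisor $E$ computing the mld, apply Conjecture \ref{conj} to $E$ and $\a$ to obtain $F$ with $a(F;A,\a)\le a(E;A,\a)=\mld(0;A,\a)$, and use the definition of the mld to force equality.

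Where you diverge is the case $\mld(0;A,\a)=-\infty$, and there your treatment has a gap. The statement is not vacuous in that case: by the paper's definition, a divisor computes $\mld=-\infty$ exactly when its log discrepancy is negative, so you must actually exhibit an $F$ obtained by at most two weighted blow-ups with $a(F;A,\a)<0$. Your proposed perturbation --- shrink the exponents so the pair becomes log canonical and let $\varepsilon\to 0$ ``along a suitable limit of divisors'' --- is not well-defined as stated (there is no natural limit of divisors, and the divisor produced by Conjecture \ref{conj} may change with $\varepsilon$) and is in any case unnecessary. The paper's argument is simpler and avoids any limit: take $E$ with $a(E;A,\a)<0$, choose $t<1$ with $a(E;A,\a^t)=0$ (possible since $v_E(\a)>k_E+1>0$, and $\a^t$ is still a real ideal), and apply the main statement to the single pair $(A,\a^t)$. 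The resulting $F$ satisfies $k_F+1\le t\,v_F(\a)<v_F(\a)$, hence $a(F;A,\a)<0$ for the original ideal, so $F$ computes $\mld=-\infty$. Finally, the existence of a divisor computing the mld, which you call standard, is justified in the paper only when a log resolution of $(A,\a\m_0)$ exists (e.g.\ $\char k=0$ or $N\le 3$); this suffices for the results actually proved here, but for the conjectural statement with $N\ge 4$ in positive characteristic it is an additional hypothesis rather than a formality.
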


One of the motivations of the conjectures is that it is considered as a ``weighted blow-up version"
of Musta\c{t}\v{a}-Nakamura Conjecture (MN-Conjecture for short):

\begin{conj}[MN-Conjecture \cite{mn}]\label{mn} Fix $N$ and the exponent $e$ of $\bR$-ideals.
   Then, there exists a number $\ell_{N,e}\in \bN$ depending only on $N$ and $e$
   such that for any $\bR$-ideal $\a$ with the exponent $e$ 
  the minimal log discrepancy $\mld(0;A,\a)$ is
    computed by a prime divisor obtained by at most $\ell_{N,e}$ times blow-ups.
    Here, the blow-up means the ``usual blow-up", i.e., blow-up with the center at an irreducible reduced
    closed subset.
\end{conj}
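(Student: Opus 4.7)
Since the MN-conjecture is a well-known deep open problem, I can only outline a strategic plan rather than a proof. The natural approach splits into two stages: first, establish a weighted analogue that bounds the number of \emph{weighted} blow-ups needed in terms of $N$ and $e$; second, bound the weights themselves and resolve each weighted blow-up into a bounded number of usual blow-ups.

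For the first stage, I would iterate the philosophy of Theorem~\ref{prototype}. Given a prime divisor $E$ over $A$ centered at $0$, record its weight vector $w=(w_1,\ldots,w_N)$ relative to a regular system of parameters at $0$, perform the weighted blow-up of $A$ with weight $w$, and let $F$ be the exceptional divisor. An inequality $a(E;A,\a)\geq a(F;A,\a)$ of the kind given by Theorem~\ref{prototype} would certify $F$ as a ``better'' divisor; if $F\neq E$, then $E$ becomes a divisor over a pair on the exceptional locus of strictly smaller complexity in some invariant (for example the Mather index or the depth of the Rees filtration), and one iterates. Conjecture~\ref{corconj}, which this paper establishes for $N=3$ under a generality hypothesis, would be precisely the base case of such an induction.

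For the second stage, the crucial input is \emph{a priori} boundedness of the weights $w_i$ for divisors that actually compute $\mld(0;A,\a)$. Using $a(E;A,\a)=k_E+1-\sum_j e_j\,\ord_E(\a_j)$ together with arc-space techniques from \cite{mn}, the locus of divisorial valuations realizing the minimum sits inside a bounded polyhedral region of a dual cone once the exponent $e$ is fixed. A combinatorial counting argument then uniformly bounds the weights $w_i$ that can appear. Finally, each weighted blow-up with bounded weights can be resolved by standard toric means into a uniformly bounded number of usual blow-ups with smooth centers, yielding the desired $\ell_{N,e}$.

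The principal obstacle is the second stage: uniform boundedness of the weights of divisors computing mld for fixed exponent $e$. This is essentially equivalent to the ACC conjecture for mld in the fixed-exponent regime, a central open problem in birational geometry that is unresolved in dimension $\geq 4$. The main theorem of the present paper should be viewed as providing exactly this kind of boundedness in the smooth $3$-fold case with a ``general'' ideal, which is why the weighted blow-up version succeeds in this setting even though the full MN-conjecture does not immediately follow.
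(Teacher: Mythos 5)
The statement you were asked about is not a theorem of this paper: it is the Musta\c{t}\v{a}--Nakamura Conjecture, quoted from \cite{mn} as an open problem and used only as motivation. The paper contains no proof of it, so there is nothing to compare your proposal against except the paper's partial results, and you are right not to claim a proof. Your two-stage plan --- first a weighted-blow-up version bounding the number of weighted blow-ups (Conjectures \ref{conj} and \ref{corconj}, proved here for $N=3$ and general ideals), then a bound on the weights followed by a toric resolution of each weighted blow-up into boundedly many usual blow-ups --- is exactly the route the paper itself follows in the one special case where it can deduce MN, namely Corollary \ref{mn3}: there $\mld\geq 1$ forces the mld to be computed by a single weighted blow-up (Theorem \ref{mld1}), and the conversion to usual blow-ups is delegated to the argument of \cite[Corollary 1.6]{ip}. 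So your sketch is consonant with the paper's perspective.

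Two small corrections to your assessment. First, your statement that uniform boundedness of the weights for fixed exponent $e$ is ``essentially equivalent to ACC'' overstates the known logical relationship: the established implication is that MN implies ACC (as recalled after Conjecture \ref{mn}), and in the special case the paper handles, the weight bound comes directly from discrepancy inequalities such as $0\leq a(E_1;A,\a)<1$ in Lemma \ref{E1} (which pins the weight down to $(1,1,n)$ or $(2,2,3)$), not from any ACC-type input. Second, note that the weighted versions in this paper deliberately do \emph{not} fix the exponent $e$, whereas MN requires it; the dependence on $e$ enters only at your second stage, which is precisely where the genuine difficulty lies and why the paper leaves MN as a conjecture.
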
   
   
If this conjecture holds, then ACC Conjecture for these pairs holds (\cite{mn}), so it seems
to be a significant conjecture. 
   On the other hand, MN-Conjecture is equivalent to a reasonable conjecture on arc spaces (\cite{ii}),
   so it makes sense to study it.\\
    Note that MN-Conjecture requires to fix an exponent $e$, while the weighted blow-up versions
(Conjecure \ref{conj}, \ref{corconj})
do not require it.
Assume Conjecture \ref{conj} holds, 
it is also an interesting question whether the weights of the blow-ups can be bound uniformly in terms of exponents.
This will strengthen the MN-Conjecture.

    Another motivation of Conjecture \ref{conj} is for the project to bridge between positive 
    characteristic and characteristic 0 (\cite{ii}).
    In \cite{ii}, we have:
 \begin{lem} Let $\a$ be an $\bR$-ideal on a smooth variety $A_k$ over $k$ ($\char k=p>0)$
         and $E$ a prime divisor over $(A_k,0_k)$  computing $\mld(0_k;A_k,\a)$.
      
      If there exists an $\bR$-ideal $\widetilde\a$ on a smooth variety $A_\bC$ over $\bC$
     and a prime divisor $\widetilde{E} $ over $(A_\bC, 0_\bC)$, where $0_\bC\in A_\bC$
     such that 
     \begin{enumerate}
         \item $\widetilde\a (\mod p)=\a$  (see \cite{ii} for the definition of (\mod p))
         \item $a(\widetilde{E}; A_\bC, \widetilde\a)\leq a(E; A_k, \a)$,
     
     \end{enumerate}
     then, $\mld(0_\bC; A_\bC, \widetilde\a)=\mld(0_k; A_k, \a)$.
 \end{lem}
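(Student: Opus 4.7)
The strategy is to establish the two inequalities $\mld(0_\bC; A_\bC, \widetilde\a) \leq \mld(0_k; A_k, \a)$ and $\mld(0_\bC; A_\bC, \widetilde\a) \geq \mld(0_k; A_k, \a)$ separately. The former is immediate from the hypotheses on $\widetilde E$, whereas the latter is the substantive ingredient and should follow from the reduction-mod-$p$ machinery developed in \cite{ii}.

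For the upper bound, the key observation is that $E$ computes $\mld(0_k; A_k, \a)$, so
\[
\mld(0_k; A_k, \a) \;=\; a(E; A_k, \a) \;\geq\; a(\widetilde E; A_\bC, \widetilde\a) \;\geq\; \mld(0_\bC; A_\bC, \widetilde\a),
\]
where the middle inequality is hypothesis (2) and the last is simply the definition of $\mld$ as an infimum over the log discrepancies of divisors centered at $0_\bC$. This chain gives the upper bound with essentially no work beyond unpacking the hypotheses.

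For the lower bound, I would invoke a semicontinuity-type inequality from \cite{ii}: whenever $\widetilde\a \mod p = \a$ on corresponding smooth ambient varieties, one expects
\[
\mld(0_k; A_k, \a) \;\leq\; \mld(0_\bC; A_\bC, \widetilde\a).
\]
Heuristically, reducing a characteristic zero pair modulo $p$ can only make the singularity worse (extra Frobenius-related divisors and specialization phenomena can appear on the special fibre), so the mld can only drop under reduction. Combined with the upper bound above, all inequalities collapse and equality follows.

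The main obstacle is that $\widetilde E$ is not assumed to compute the characteristic zero mld, only to witness an upper bound for it. Thus the matching lower bound cannot be extracted from a naive divisor-by-divisor comparison between the two characteristics, and must be imported from the reduction-mod-$p$ formalism. Verifying that formalism carefully in the generality needed here---namely for \emph{real} ideals $\widetilde\a$ rather than merely coherent ones, and allowing divisors obtained by taking valuations arising transcendentally over the special fibre---is the real technical content, and is what \cite{ii} provides.
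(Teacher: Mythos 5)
Your proposal is correct and is essentially the intended argument: the paper itself states this lemma without proof, importing it from \cite{ii}, and the natural decomposition is exactly yours — the chain $\mld(0_\bC; A_\bC, \widetilde\a)\leq a(\widetilde E; A_\bC,\widetilde\a)\leq a(E;A_k,\a)=\mld(0_k;A_k,\a)$ from the hypotheses, combined with the reduction-mod-$p$ inequality $\mld(0_k;A_k,\a)\leq \mld(0_\bC;A_\bC,\widetilde\a)$, which comes from the fact (established in \cite{ii}) that a divisor over $A_\bC$ on a model spreads out and reduces mod $p$ to a divisor over $A_k$ with the same $k_E$ and the same values on the $\a_i$, hence the same log discrepancy. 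The only point worth adding is the degenerate case $\mld(0_k;A_k,\a)=-\infty$: there the paper's convention is $a(E;A_k,\a)<0$ rather than equality with the mld, but hypothesis (2) then gives $a(\widetilde E;A_\bC,\widetilde\a)<0$, forcing $\mld(0_\bC;A_\bC,\widetilde\a)=-\infty$ as well, so the conclusion still holds.
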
   
 \begin{rem}
     In particular, if such  $\widetilde\a$ and $\we$ exist  for every $\a$  and  $E$
     and assume that  $\mld(0_k;A_k,\a)$
     is computed by a divisor, then the set of $\mld(0_k; A_k, \a)$'s is contained in 
     the set of $\mld(0_\bC; A_\bC, \b)$'s.
     Therefore, if we fix the exponent $e$ and the dimension $N$ of $A_k$, then the number of the values $\Lambda_e:=\{\mld(0_k, A_k, \a) \mid \a \ \mbox{is\ a \ $\bR$-\ ideal
     \ with\ the exponent \ } e\}$
     is finite for $\char k>0$, because it is proved to be finite in characteristic $0$ by \cite{kawk}.
     Similarly,
     if ACC holds in characteristic 0, then it also holds in positive characteristic. 
\end{rem}

    Now, the problem is to construct appropriate $\widetilde{E}$ and $\widetilde\a$
    for  given $E$ and $\a$.
    If Conjecture \ref{conj} holds, we can reduce this problem to a divisor $F$ of special type
    (i.e., obtained by at most $N-1$ weighted blow-ups), which seems easier to handle.   
    
    The main results of this paper are the following:
 \begin{thm}\label{main} Assume $N=3$.
      For every prime divisor $E$ over $A$ with the center at $0$, there exists a prime divisor
      $F$  centered at $0$ obtained by at most two weighted blow-ups satisfying
      $$a(E; A,\a)\geq a(F;A,\a),$$
      for every   ``general''  $\bR$-ideal $\a$ for $E$ such that $a(E; A,\a)\geq0$. 
 \end{thm}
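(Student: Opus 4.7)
The plan is to follow the template of the two-dimensional prototype (Theorem~\ref{prototype}) but to allow a second weighted blow-up when one is insufficient. Let $v = v_E$ denote the divisorial valuation corresponding to $E$, and fix a regular system of parameters $(x_1, x_2, x_3)$ of $\o_{A, 0}$. Write $a_i = v(x_i)$ and let $\pi_1 \colon A_1 \to A$ be the weighted blow-up at $0$ with weights $w = (a_1, a_2, a_3)$; its exceptional prime divisor $F_1$ has classical log discrepancy $a(F_1;A,\o_A) = a_1+a_2+a_3$, and $v_{F_1}(f) = \ord_w(f)$ for every $f \in \o_{A,0}$. One always has $v(f) \geq v_{F_1}(f)$, and the precise content of the hypothesis that $\a$ is general for $E$ is that these inequalities become equalities on a generating set of $\a$, forcing $v(\a) = v_{F_1}(\a)$. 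Consequently, whenever $a(E;A,\o_A) \geq a_1+a_2+a_3$, the choice $F = F_1$ immediately yields $a(E;A,\a) \geq a(F;A,\a)$ and the theorem is proved with a single weighted blow-up.

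The substantive case is $a(E;A,\o_A) < a_1+a_2+a_3$, in which $E$ is a nontrivial divisor over $A_1$ whose center is a proper subvariety $Z \subset F_1$. I would work in an affine chart of $A_1$ containing a closed point $p$ of $Z$, choose local coordinates $(y_1, y_2, y_3)$ at $p$ with $y_1 = 0$ cutting out $F_1$ locally, set $b_i = v(y_i)$, and perform the second weighted blow-up $\pi_2\colon A_2\to A_1$ at $p$ with weights $(b_1,b_2,b_3)$; let $F_2$ denote its exceptional divisor. As before, $v(g)\geq v_{F_2}(g)$ with equality on initial terms, and the generality of $\a$ for $E$ ensures $v(\a)=v_{F_2}(\a)$. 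The desired inequality $a(E;A,\a)\geq a(F_2;A,\a)$ therefore reduces to the purely numerical estimate $a(E;A,\o_A)\geq a(F_2;A,\o_A)$, where the right-hand side equals $b_1+b_2+b_3+(a_1+a_2+a_3-1)b_1$ via the standard discrepancy formula for a weighted blow-up of a smooth point together with $K_{A_1/A}=(a_1+a_2+a_3-1)F_1$.

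The main obstacle is establishing this numerical inequality, together with showing that the local coordinates at $p$ can be chosen so that the bound holds. The dimension-three feature is that after two such weighted blow-ups the center of $v$ already has codimension one in the ambient variety, so no further blow-ups are required; this is the source of the bound $N-1=2$ predicted by Conjecture~\ref{conj}. The argument splits into cases depending on whether $Z$ is a curve or a closed point on $F_1$ and on the relative magnitudes of the $b_i$; in each case one must exhibit parameters $(y_1,y_2,y_3)$ at $p$ for which the numerical inequality is achieved. The generality of $\a$ for $E$ is exactly what excludes the degenerate configurations in which the successive initial forms of generators of $\a$ would conspire to force more than two weighted blow-ups, and it is what permits the reduction to a purely numerical discrepancy inequality at each stage.
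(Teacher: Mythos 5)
There is a genuine gap — in fact the proposal assumes away the central difficulty. The key unjustified step is the claim that generality of $\a$ for $E$ means $v_E(\a)=v_{F_1}(\a)$ (and likewise $v_E(\a)=v_{F_2}(\a)$ after the second blow-up), which would reduce everything to the ``purely numerical'' inequality $a(E;A,\o_A)\geq a(F_i;A,\o_A)$. This is not what generality means in the theorem (Definition \ref{defofgeneral}: it is a bound $\ord_B\a_{A_1}\o_{E_1}\leq 1$ along a specific low-degree ``bad curve'' $B$ on the exceptional divisor), and the claimed equality is false even in the paper's motivating Example \ref{not1}, where $v_{E_1}(f)=4$ but the divisor computing the mld has $v(f)=10$. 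Moreover the numerical inequality $k_E+1\geq k_{F_2}+1$ you reduce to is false in general; the actual argument must couple the discrepancy with the valuation of the ideal. The paper's mechanism is Lemma \ref{ord}: if $a(E_1;A,\a)>a(E;A,\a)\geq 0$, then inversion of adjunction forces $\ord_P(\a_{A_1}\o_{E_1})>1$ at the center $P$ of $E$, and combining this with a weighted Bezout estimate (Lemma \ref{bezout}) and the generality hypothesis pins the standard weight down to $(1,1,n)$ or $(2,2,3)$ and gives $a(E_1;A,\a)<1$ (Lemma \ref{E1}). Only then does a case analysis on the second blow-up close the argument. None of this structure appears in your proposal, and without it there is no reason the second blow-up should succeed.

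Two further points would sink the argument even if the above were repaired. First, the weights cannot be read off from an arbitrary RSP: one needs a \emph{standard system}, where $x_1$ attains the minimum and $x_3$ the maximum of $v_E$ on $\m_{A,0}\setminus\m_{A,0}^2$; otherwise even the two-dimensional prototype fails. Second, when the center $Z$ of $E$ on $A_1$ is a curve, the second weighted blow-up must be performed at the \emph{generic point} of $Z$, not at a closed point $p\in Z$ — this is why the paper develops the machinery of weighted blow-ups at non-closed points, the passage to the algebraic closure of the residue field (Proposition \ref{correspondence}), and the persistence of standard systems under that extension (Lemma \ref{ext.standard}), so as to invoke the surface case of \cite{kawk1} and \cite{ip}. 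Relatedly, your assertion that ``after two such weighted blow-ups the center of $v$ already has codimension one'' is not true and is not what the theorem claims: $E$ itself need not appear on $A_2$; one only shows that the exceptional divisor $F$ of the second blow-up satisfies $a(F;A,\a)\leq a(E;A,\a)$.
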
   
 
   The terminology ``general'' will be defined in Definition \ref{defofgeneral}.
   The weighted blow-ups will be constructed by ``squeezed'' blow-ups (see, Definition \ref{def.st.blowup}) depending only on $E$
   and it works for every general ideal.
   Here, ``general'' is necessary, because there exists an example of non-general ideal such that two  squeezed blow-ups do not give the 
   required divisor in the theorem (cf. Example \ref{3times}).
   But it does not give a counter example for Conjecture \ref{conj}, indeed for the example there exists another sequence of weighted blow-ups
   to obtain the required divisor (see, also Example\ref{3times}).

 As a corollary we obtain: 
 \begin{cor}\label{maincor} Assume $N=3$.
    Then, for every  pair $(A,\a)$ with a ``general'' $\bR$-ideal $\a$, the minimal log discrepancy $\mld(0;A,\a)$ is
    computed by a prime divisor obtained by at most two weighted blow-ups. 
 \end{cor}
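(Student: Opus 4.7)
The plan is to derive Corollary \ref{maincor} from Theorem \ref{main} by the usual device: start with a prime divisor that computes (or witnesses) the minimal log discrepancy, then invoke Theorem \ref{main} to replace it with a divisor obtained by at most two weighted blow-ups whose log discrepancy is no larger. The only nuisance is the case $\mld(0;A,\a)=-\infty$, which is not directly covered by the hypothesis $a(E;A,\a)\geq 0$ of Theorem \ref{main}; it will be reduced to the other case by a brief rescaling of exponents.

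In the case $\mld(0;A,\a)\geq 0$, a log resolution of $(A,\a)$ at $0$ supplies a prime divisor $E$ over $A$ with center $0$ for which $a(E;A,\a)=\mld(0;A,\a)$. Since $\a$ is general, Theorem \ref{main} applies to $E$ and produces a prime divisor $F$ obtained by at most two weighted blow-ups with
$$a(F;A,\a)\leq a(E;A,\a)=\mld(0;A,\a).$$
The reverse inequality is automatic from the definition of $\mld$, so equality holds and $F$ computes $\mld(0;A,\a)$.

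For the case $\mld(0;A,\a)=-\infty$, I would pick any prime divisor $E$ with center $0$ and $a(E;A,\a)<0$, write $\a=\a_1^{e_1}\cdots\a_r^{e_r}$, and introduce the one-parameter family $\a_t=\a_1^{te_1}\cdots\a_r^{te_r}$. Since $t\mapsto a(E;A,\a_t)$ is linear with value $a(E;A)\geq 1$ at $t=0$ and a negative value at $t=1$, there is $t_0\in(0,1)$ with $a(E;A,\a_{t_0})=0$. The underlying ideal sheaves of $\a_{t_0}$ coincide with those of $\a$, so the genericity hypothesis is preserved; Theorem \ref{main} applied to $E$ and $\a_{t_0}$ yields a prime divisor $F$, obtained by at most two weighted blow-ups and depending only on $E$, with $a(F;A,\a_{t_0})\leq 0$. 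From the linear identity
$$a(F;A,\a)=a(F;A,\a_{t_0})-(1-t_0)\sum_{i}e_i\,\ord_F(\a_i),$$
together with $\sum_i e_i\ord_F(\a_i)\geq a(F;A)/t_0>0$ forced by $a(F;A,\a_{t_0})\leq 0$ and $a(F;A)\geq 1$, one obtains $a(F;A,\a)<0$, so $F$ witnesses $\mld(0;A,\a)=-\infty$.

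I expect the whole deduction to be short, in line with the author's remark that the corollary is immediate; the only obstacle is the perturbation argument for the $-\infty$ case, and the reason it works is that the divisor $F$ produced by Theorem \ref{main} depends only on $E$ and not on the particular real ideal to which the theorem is applied.
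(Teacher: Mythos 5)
Your proposal is correct and follows the same route as the paper: for $\mld\geq 0$ apply Theorem \ref{main} to a divisor computing the mld, and for $\mld=-\infty$ rescale the exponents by a factor $t_0<1$ so that $a(E;A,\a^{t_0})=0$ and then apply the theorem. Your write-up merely fills in the details (the linear identity showing $a(F;A,\a)<0$, and the observation that genericity survives the rescaling) that the paper delegates to the argument of \cite{kawk1}.
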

 
  It is known as the Zariski's sequence 
  that every prime divisor $E$ over $A$ with the center at $0$ is obtained by successive usual blow-ups from $A$, 
 such that the centers of blow-ups are the center of $E$ on each step (\cite[VI, 1.3]{Ko} ).
 The following corollary shows that in some cases, we obtain the two weighted blow-ups to compute the mld
 by just looking at the center of the second blow-up in the Zariski's sequence.
 
 \begin{cor}[Corollary \ref{111}]\label{usual} Assume $N=3$.
     Let $E$ be a prime divisor over $A$  computing $\mld(0; A, \a)$ for a pair $(A,\a)$.
     Let  $A_1\to A$ be the first usual blow-up with the center at $0$ in the Zariski's sequence.
     Assume that the center $C\subset A_1$ of $E$  is a curve of degree $\geq 2$ in 
     the exceptional divisor $E_1\simeq \bP^2$.
     Then a weighted blow-up which is called `` squeezed blow-up'' at $C$ gives a divisor computing $\mld(0; A, \a)$.   
\end{cor}
     Note that in this case the first blow-up is also a  squeezed blow-up.
    Example \ref{not1} is just in this case.
    In Section 5, we show  a more general corollary.
    On the other hand, if we restrict to the case $\mld\geq 1$, then we have the following:
   
 \begin{thm}\label{mld1} Assume $N=3$.
    Then, for every  general pair $(A,\a)$ with  $\mld(0;A,\a)\geq 1$, 
    the minimal log discrepancy  is
    computed by a prime divisor obtained by one weighted blow-up. 
 \end{thm}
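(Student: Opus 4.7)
The plan is to refine Corollary \ref{maincor} under the extra assumption $\mld(0;A,\a)\ge 1$ by showing that, once the mld is at least $1$, the second of the (at most) two weighted blow-ups produced there is never strictly needed.

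Pick a prime divisor $F$ computing $\mld(0;A,\a)$ obtained by at most two weighted blow-ups, as guaranteed by Corollary \ref{maincor}. If one weighted blow-up suffices we are done, so assume $F$ is the exceptional divisor of a second weighted blow-up $\pi_2:A_2\to A_1$ sitting on top of a first weighted blow-up $\pi_1:A_1\to A$ whose exceptional divisor is $F_1$. Since $F_1$ is itself a prime divisor over $A$ centered at $0$, the inequality $a(F_1;A,\a)\ge\mld(0;A,\a)\ge 1$ is automatic. The aim is to promote it to equality, so that $F_1$ itself computes the mld and is obtained by a single weighted blow-up.

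Apply the chain rule for log discrepancies along $\pi_2\circ\pi_1$. Writing $m:=\mathrm{ord}_F(F_1)\ge 1$ and letting $\widetilde{\a}:=\a\cdot\o_{A_1}(\mathrm{ord}_{F_1}(\a)F_1)$ be the residual of $\a$ on $A_1$, a direct computation gives
$$a(F;A,\a)\;=\;m\bigl(a(F_1;A,\a)-1\bigr)+a(F;A_1,\widetilde{\a}).$$
Since the left-hand side equals $\mld(0;A,\a)$, if $F_1$ failed to compute the mld we would have $a(F_1;A,\a)-1>\mld(0;A,\a)-1\ge 0$, and the elementary inequality $m-(m-1)x\le 1$ valid for $m\ge 1$ and $x\ge 1$ would force $a(F;A_1,\widetilde{\a})<1$. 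To reach a contradiction it therefore suffices to establish the opposite inequality $a(F;A_1,\widetilde{\a})\ge 1$.

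The crucial final step is this lower bound, and it is where the genericity of $\a$ and the \emph{standard} nature of the weighted blow-ups of Theorem \ref{main} must enter decisively. The idea is that, because $(A,\a)$ is canonical at $0$, the residual pair $(A_1,\widetilde{\a})$ should inherit canonicity at the center of $\pi_2$; because $\a$ is general, this canonicity propagates to $\widetilde\a$ at that center; and because $\pi_2$ is a standard weighted blow-up, $\mathrm{ord}_F(\widetilde{\a})$ and $|\mathbf{q}|$ (the weight sum of $\pi_2$) take their expected generic values, making $a(F;A_1,\widetilde{\a})=|\mathbf{q}|-\mathrm{ord}_F(\widetilde{\a})\ge 1$. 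The main obstacle is precisely this inheritance: tracing canonicity of $(A,\a)$ at $0$ through the weighted blow-up $\pi_1$ to the residual pair $(A_1,\widetilde{\a})$ at the center of $\pi_2$ is delicate because the naive chain rule gives only $a(E;A_1,\widetilde\a)\le a(E;A,\a)$ for divisors $E$ over $A_1$ lying above $F_1$, so real use must be made of generality (Example \ref{3times} shows that without it the argument cannot work). Once the lower bound is in place, the chain-rule identity closes off the contradiction and yields Theorem \ref{mld1}.
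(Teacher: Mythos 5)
Your reduction is arithmetically sound up to the last step: the chain rule $a(F;A,\a)=m\bigl(a(F_1;A,\a)-1\bigr)+a(F;A_1,\widetilde\a)$ is correct (it is the paper's identity (\ref{expression}) with $D=F_1$), and the deduction that the hypothesis $a(F_1;A,\a)>\mld(0;A,\a)\ge 1$ forces $a(F;A_1,\widetilde\a)<1$ is valid. But the entire content of the theorem is then concentrated in the unproved claim $a(F;A_1,\widetilde\a)\ge 1$, which you yourself flag as "the main obstacle." Nothing you write establishes it: the chain rule only gives $a(F;A_1,\widetilde\a)\le a(F;A,\a)$ when $a(F_1;A,\a)\ge 1$, i.e.\ an upper bound, and the heuristic that "canonicity of $(A,\a)$ should be inherited by $(A_1,\widetilde\a)$" is exactly the kind of statement that fails for weak transforms in general and would itself require the quantitative analysis on the exceptional weighted projective plane. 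So the proposal is a correct reformulation of the problem, not a proof.

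The paper's argument avoids this entirely and is essentially one line from Lemma \ref{E1}(i): for $E$ computing $\mld(0;A,\a)$ and $E_1$ the exceptional divisor of a standard blow-up, Lemma \ref{E1}(i) says that \emph{if} $a(E_1;A,\a)>a(E;A,\a)\ge 0$ and $\a$ is general, \emph{then} $a(E_1;A,\a)<1$; since $a(E_1;A,\a)\ge\mld(0;A,\a)\ge 1$, the hypothesis cannot hold, so $a(E_1;A,\a)\le a(E;A,\a)$ and $E_1$ (one weighted blow-up) computes the mld. The quantitative input you are missing is precisely the content of Lemma \ref{E1}: using Lemma \ref{ord} (inversion of adjunction gives $\ord_P\a_{A_1}\o_{E_1}>1$), the Bezout-type Lemma \ref{bezout} on $\bP(v_1',v_2',v_3')$, and the generality of $\a$ (to rule out the bad curve), one shows $v_{E_1}(\a)>v_1'v_3'$, which combined with $a(E_1;A,\a)=2v_1'+v_3'-v_{E_1}(\a)\ge 0$ pins the weight down to $(1,1,n)$ or $(2,2,3)$ and yields $a(E_1;A,\a)<1$. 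If you want to salvage your route, you would have to prove a statement of this strength anyway, at which point the detour through Corollary \ref{maincor} and the chain rule is superfluous.
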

 
 \begin{cor}\label{mn3}Assume $N=3$. In
$$\Lambda=\{(A,\a) \mid  \ \mld(0; A,\a)\geq 1\ \mbox{with\ general }\ \a \}$$
Musta\c{t}\v{a}-Nakamura Conjecture holds 
and also ACC Conjecture holds for $\char k\geq 0$.
Here, ACC Conjecture means that the set of $\mld(0; A,\a)$ for the pairs in the subset 
 $\Lambda_J\subset \Lambda$ consisting of $\bR$-pairs with the exponents in $J\subset \bR_{>0}$
 satisfies Ascending Chain Condition.
 Here, $J$ is a DCC set.
\end{cor}

    The corollary follows from Theorem \ref{mld1} in the same way as in  the proof of \cite[Corollary 1.6]{ip},
    since the mld is computed by one weighted blow-up.

This paper is organized as follows: in Section 2 we prepare  basic terminologies 
which will be used in this paper.
In Section 3 we discuss about weighted blow-up at a (not necessarily closed)
smooth point and basic formula on weighted projective space, 
that is the exceptional divisor appearing in a weighted blow-up.
In Section 4 we construct an appropriate regular system of parameter (RSP for short)
with the weight, in order to make a weighted blow-up.
In Section 5 we give the proofs of the main results.

{\bf Acknowledgement} The author would like to thank Kohsuke Shibata, Lawrence Ein, Masayuki Kawakita and Yuri Prokhorov for
  useful discussions.
 The author expresses her hearty thanks to the referee for the numerous constructible comments to improve the readability greatly
 of the paper.

\section{Preliminaries}

Let $A$ be an $N$-dimensional smooth variety defined over an algebraically closed field $k$.
We fix a closed point $0\in A$.
\begin{defn} We call $E$ a {\sl prime divisor over } $A$,
  if there is a proper birational morphism $\varphi: A' \to A$ from a normal variety $A'$ on which 
  $E$ is an irreducible divisor.
  The generic point $P\in A$ of the image $\varphi(E)$ is called the {\sl center of } $E$ on $A$.
  In this case, we sometimes call $E$ a prime divisor over $(A,P)$.
  \end{defn}

\begin{defn} For a prime divisor $E$ over a non-singular variety $A$,
 let $\varphi: A'\to A$ be a proper birational morphism with normal $A'$
 such that  $E$ appears on $A'$.
  Let $k_E$ (or sometimes written as  $k_{E/A}$) be the coefficient of the relative canonical divisor $K_{A'/A}$
  at $E$  and $v_E$  the valuation defined by the prime divisor $E$.
  Here, note that $k_E$ ($k_{E/A}$) does not depend on the choice of $A'$.

   Let $\a$ be an $\bR$-ideal on $A$ as in the beginning of the first section and
   $e_i$'s are the exponents.
  The {\sl log discrepancy}
  of the pair $(A,\a)$ at $E$ is defined as 
  $$a(E; A, \a):=k_E-\sum_i e_iv_E(\a_i)+1$$
  and the {\sl minimal log discrepancy } of the pair at a closed point $0$ is defined as
  $$\mld(0; A, \a):=\inf \{a(E; A, \a)\mid E \ \mbox{prime\ divisor \ over\ }A \mbox{\ with\ the \ center\ at}\ 0\}$$
 
 \end{defn}
 
   It is known that for $N\geq 2$, either $\mld(0; A,\a)\geq 0$ or $\mld(0; A,\a)=-\infty$ holds.
   For $N=1$, we define $\mld(0;A, \a)=-\infty$ if the lefthand side is negative, by abuse of notation, 
   because it is convenient to describe the Inversion of Adjunction.
   
\begin{defn} We say that a prime divisor $E$ over $A$ with the center at $0$  {\sl computes} $\mld(0;A,\a)$
    
    if either  $a(E; A,\a)=\mld(0;A,\a)$ (when the right hand side is $\geq 0$)
     
     or $a(E; A,\a)<0$ (when the mld is $-\infty$).
    
\end{defn}   

\begin{rem}
     Assume  there exists a log resolution of the pair $(A, \a\m_0)$,
    where $\m_0$ is the maximal ideal defining $0\in A$.
     If $\mld(0;A,\a)\geq 0$,
   then, on every such resolution
    there is a prime divisor computing $\mld(0;A,\a)$.
   If $\mld(0;A,\a)=-\infty$ and $Z(\a)\subset A$ contains an irreducible component of codimension one,
   there may not exist a prime divisor computing the mld among the exceptional divisors appearing in a given log resolution
   (cf. \cite[Proposition 7.2]{EM}).
   But in this case, if we construct an appropriate log resolution of  $(A, \a\m_0)$ by taking more blowing-ups  from the given one, a prime divisor computing  $\mld(0;A,\a)$
   appears on that.
   Therefore, for $\char k=0$ or $N\leq 3$,  every pair $(A, \a)$ has a prime divisor computing  $\mld(0;A,\a)$,
   since there is a log resolution for every pair.
      \end{rem}

\section{Weighted blow-ups and weighted projective spaces}

    In this section $A$ is always a smooth variety of dimension $N\geq2$ defined over an algebraically closed field $k$
    and $P\in A$ is a (not necessarily closed) point.

\begin{defn}  Let $x_1,\ldots, x_c$ be an RSP of  a regular local ring $R$ with the algebraically closed residue field 
     and $w_1,\ldots,w_c$ be positive integers with $\gcd(w_1,\ldots,w_c)=1$.
     For $n\in \bN$, denote by $\mathcal I_n$ the ideal in $R$
     generated by all monomials $x_1^{s_1}\cdots x_c^{s_c}$
     such that $\sum_{i=1}^c s_iw_i\geq n$.
     The  {\sl weighted blow-up} of $\spec R$ with $wt_w(x_1,\ldots, x_c)=(w_1,\ldots, w_c)$ is 
     the canonical projection:
     $$\rm{Proj}_{A}(\oplus_{n\in \bN}{\mathcal I}_n)\to A:=\spec R.$$
     The exceptional divisor $E$ for the weighted blow-up is called {\sl a prime divisor
     obtained by a  weighted blow-up} of $A$ at $P$.  
     
     More generally, let $P\in A$ be a smooth point with the not-necessarily-algebraically closed residue field $K$.
     Let $\ok$ be the algebraic closure of the residue field of $\o_{A,P}$.
     A {\sl weighted blow-up of $A$ at the  point $P$ } is the canonical morphism induced from 
     a weighted blow-up $\overline{A}\to \spec \ok\widehat\o_{A,P}$ for some RSP $x_1,\ldots, x_c$  of $\ok\widehat\o_{A,P}$
     with $wt_w(x_1,\ldots, x_c)=(w_1,\ldots, w_c)$ for some $ (w_1,\ldots, w_c)\in \bZ^c_{>0}$,
     where $\ok\widehat\o_{A,P}$ is the extension of the formal power series ring $\widehat\o_{A,P}$ over $K$  to the one
     over $\ok$.
     Let $\oe$ be the prime divisor obtained by the weighted blow-up  $\overline{A}\to \spec \ok\widehat\o_{A,P}$.
     The prime divisor $E$ over $A$ with the center at $P$ corresponding to $\oe$   is called a {\sl prime divisor obtained 
     by a weighted blow-up}   of $A$ at $P$.
     Note that if $\oe $ gives a valuation $\overline v$ and the valuation ring $\o_{\overline v}$, the prime divisor $E$ corresponds to the valuation $v$ whose
     valuation ring is $K(A)\cap \o_{\overline v}$.
     
 \end{defn}

 Note that   weighted blow-ups are only  defined at  smooth   points.

Here, we show a 3-dimensional  example that the minimal log discrepancy is not computed  by a divisor obtained by
    only one weighted blow-up, but computed by a divisor obtained by two weighted blow-ups .

The following are well known, for example see \cite[Remark 2.6, Lemma 2.7]{kawk2}.   
 \begin{rem} 
   Let $P\in A$ be a point of a smooth variety with the residue field $K$.
   \begin{enumerate}
   \item The set of prime divisors over $A$ with the center at $P$  corresponds bijectively to
      the set of prime divisors over $\widehat A:=\spec \widehat\o_{A,P}$ with the center at the closed point.
      Moreover, if  prime divisors $E$ and $\widehat E$ correspond under the above bijection, then for every $\bR$-ideal $\a$ on $A$ we have $v_E(\a)=v_{\widehat E}(\a)$
      and also $a(E;A,\a)=a(\widehat E,\widehat A, \a\o_{\widehat A})$.
   
   \item
    Let  $K'\supset K$ be a field extension and $A':=\spec K'\widehat\o_{A,P}$.
    Then, there is a surjective map from the set of prime divisors over $A'$ with the center at the closed point 
    to the set of prime divisors over $A$ with the center at $P$.
    If  prime divisors $E'$ and $E$ correspond by the above surjective map, then
    it follows $a(E'; A', \a\o_{A'})=a(E; A, \a)$ for every $\bR$-ideal $\a$ on $A$.

   \end{enumerate}

 \end{rem}   
    
\begin{exmp}\label{not1}
  Assume $\char k\neq 2,5$.
Let $A:=\bA_k^3$ and $\a=(f)^{7/10}$,
where 
$$f=(x^2+y^2+z^2)^2+x^5+y^5+z^5.$$
Then, a divisor  computing $\mld(0;A,\a)=0$ is not obtained by
one weighted blow-up (\cite[Ex. 6.45]{ksc}).

On the other hand, there is a sequence of weighted blow-ups
$$A_2\stackrel{\varphi_2}\longrightarrow A_1\stackrel{\varphi_1}\longrightarrow A$$
where $\varphi_1$ is the usual blow-up at $0$ and $\varphi_2$ is a weighted blow-up
with weight $(1,2)$ at the generic point of the curve $x^2+y^2+z^2=0$ on $E_1=\bP_k^2$.
Here, $E_1$ is the exceptional divisor for $\varphi_1$.
The exceptional divisor $E_2$ for $\varphi_2$ computes $\mld(0;A,\a)=0$

\end{exmp}

   The following lemma for a weighted projective space with a special weight is used for our main results.
   The statement is easily generalized to higher dimensional case, 
   but for  simplicity of notation we state here only for 2-dimensional case.

\begin{lem}\label{bezout}
    Let $r\leq s$ be positive integers such that $\gcd(r,s)=1$.
    Let $g\in k[x_1,x_2,x_3]$ be a weighted homogeneous polynomial with respect to the
     weight $w=(w(x_1),w(x_2),w(x_3))=(r,r,s)$
    and $Q\in \bP_k(r,r,s)$ a closed point not contained in the coordinate planes,
    i.e., $Q\not\in (x_1\cdot x_2\cdot x_3=0)$.
    Let $\ell\in  k[x_1,x_2,x_3]$ be a weighted homogeneous polynomial of $\deg_w(\ell)=r$
    such that $\ell(Q)=0$.
    If $\ell \not| g$, then it follows
    $$r\cdot s\cdot\ord_Q(g)\leq 
    r\cdot s\cdot\ord_Q(g\mid _{L})\leq 
    \deg_w g,$$
     where  $L\subset \bP_k(r,r,s)$ is the divisor defined by  $\ell=0$ in  $\bP_k(r,r,s)$.

\end{lem}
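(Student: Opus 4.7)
The plan is to leverage the fact that $\gcd(r,s)=1$ and $r\le s$ force the weight-$r$ polynomial $\ell$ to be of the form $\ell=\alpha x_1+\beta x_2$ (the degenerate case $r=s=1$, where $\bP(r,r,s)=\bP^2$, reduces at once to classical B\'ezout on $\bP^2$). Since $\ell(Q)=0$ while $x_1(Q)x_2(Q)\neq 0$, both coefficients $\alpha,\beta$ must be nonzero, so the curve $L$ is parametrized by $[x_1:x_3]\mapsto[x_1:-(\alpha/\beta)x_1:x_3]$, identifying $L$ with the weighted projective line $\bP(r,s)$.

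For the first inequality $\ord_Q(g)\le\ord_Q(g|_L)$, I would first verify that $Q$ is a smooth point of both $\bP(r,r,s)$ and $L$. A short chart computation—using that $\gcd(r,s)=1$ makes the residual $\mu_r$-actions on the charts $U_1,U_2$ of $\bP(r,r,s)$ essentially free on $x_3$—shows that $U_1$ and $U_2$ are smooth and that the chart $U_3$ is singular only at $[0:0:1]$; similarly $\bP(r,s)$ is smooth away from its two coordinate points. Since $Q$ is off every coordinate plane, it is smooth on both varieties. As $\ell$ is irreducible (it is nonzero and linear in $x_1,x_2$) and $\ell\nmid g$, the restriction $g|_L$ is not identically zero, and the desired inequality is then the standard fact that restricting a regular function at a smooth point to a smooth subvariety of codimension one not contained in its zero locus does not decrease the order of vanishing.

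The heart of the proof is the bound $rs\cdot\ord_Q(g|_L)\le\deg_w g$. Writing $f:=g|_L\in k[x_1,x_3]$, a weighted homogeneous polynomial of weight $d:=\deg_w g$, and noting that by $\gcd(r,s)=1$ any two non-negative integer solutions of $ir+js=d$ differ by a multiple of $(s,-r)$, I factor out the smallest powers of $x_1$ and $x_3$ to obtain
\[
  f \;=\; x_1^{i_\ast}\,x_3^{j_\ast}\,h\!\bigl(x_1^s,\,x_3^r\bigr),\qquad 0\le i_\ast<s,\ \ 0\le j_\ast<r,
\]
with $h(u,v)$ genuinely homogeneous of some degree $K$ satisfying $rsK=d-i_\ast r-j_\ast s\le d$. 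At $Q$, the prefactor $x_1^{i_\ast}x_3^{j_\ast}$ is a unit since $x_1(Q),x_3(Q)\neq 0$, so $\ord_Q(f)=\ord_Q(h(x_1^s,x_3^r))$. Setting $\xi:=x_1^s/x_3^r$, the chart $x_3\neq 0$ of $\bP(r,s)$ is isomorphic to $\spec k[\xi]$ (the $\mu_s$-invariants on $k[x_1]$ are $k[x_1^s]$, thanks to $\gcd(r,s)=1$), so $\xi-\xi(Q)$ is a uniformizer at $Q$. Extracting the unit $x_3^{rK}$, the expression becomes $h(x_1^s,x_3^r)=x_3^{rK}\tilde h(\xi)$ for a polynomial $\tilde h$ of degree at most $K$, whose order at $Q$ is therefore bounded by $K\le d/(rs)$, giving the claim.

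The main subtlety is pinning down $L\cong\bP(r,s)$ and identifying $\xi=x_1^s/x_3^r$ as a local parameter at $Q$ on the chart $x_3\neq 0$; once these two pieces of orbifold-geometry bookkeeping are in hand, the degree bound $K\le d/(rs)$ is immediate from the decomposition and the order bound on $\tilde h$ is the elementary fact that a polynomial of degree $K$ on $\bA^1$ vanishes at a point to order at most $K$.
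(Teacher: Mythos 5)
Your argument is correct, but it takes a genuinely different route from the paper. The paper's proof pulls everything back along the canonical covering $\pi:\bP^2\to\bP(r,r,s)$, $(X_1,X_2,X_3)\mapsto(X_1^r,X_2^r,X_3^s)$, applies B\'ezout's theorem on $\bP^2$ to $\pi^*L\cdot\pi^*G$, and then must split into cases according to whether $\char k=p$ divides $r\cdot s$: when it does, $\pi$ is inseparable near $Q$, and the paper compensates by the inclusion $\m_Q\o_{\bP^2}\subset\m_{Q_i}^{p^e}$ and a count of the (fewer) points in the fiber. You instead exploit the hypothesis $\deg_w\ell=r$ to pin down $\ell=\alpha x_1+\beta x_2$ (outside the trivial case $r=s=1$), identify $L$ with $\bP(r,s)\cong\bP^1$, and reduce the key estimate to the combinatorics of weighted-homogeneous binary forms with coprime weights: the factorization $f=x_1^{i_*}x_3^{j_*}h(x_1^s,x_3^r)$ with $rsK\le d$, plus the fact that the degree-zero part of $k[x_1,x_3][x_3^{-1}]$ is the polynomial ring $k[\xi]$ with $\xi=x_1^s/x_3^r$, so $\ord_Q(f)=\ord_Q(\tilde h(\xi))\le K$. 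This is characteristic-free by construction (the identification of the chart with $\bA^1$ is a purely combinatorial statement about the graded ring, not an invariant-theory statement, so you should phrase it that way rather than via $\mu_s$-invariants, which are delicate when $p\mid s$), and it entirely avoids the inseparable-covering analysis — a real simplification. The trade-off is that your argument leans on the special form of $\ell$, whereas the paper's B\'ezout argument would apply to divisors $L$ of arbitrary degree. Two cosmetic points: the normalization $0\le i_*<s$, $0\le j_*<r$ is not literally what ``factoring out the smallest powers'' produces (e.g.\ $f=x_1^{2s}x_3^{r}$), but the only thing used is $i_*r+j_*s+rsK=d$ with $i_*,j_*\ge0$, which does hold; and for the first inequality the paper simply declares it trivial, while your smoothness justification (that $\m_{Q}^n$ maps into $\m_{L,Q}^n$) is the correct way to make it precise.
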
  

\begin{proof} 
      As $\ord_Qg\leq \ord_Q(g\mid _{\ell})$, the first inequality is trivial.
      We will show the second inequality.
      Let $G\subset \bP_k(r,r,s)$ be the subscheme
     defined by $g=0$ on $\bP_k(r,r,s)$.
     Let    $$\pi: \bP_k^2\twoheadrightarrow \bP(r,r,s),
     (X_1,X_2,X_3)\mapsto (X_1^r,X_2^r,X_3^s)=(x_1,x_2,x_3)$$ 
     be the canonical covering.
    Then, as $\pi^*L$ and $\pi^*G$ has no common irreducible components,  Bezout's theorem on $\bP^2$ implies
  \begin{equation}\label{bezout1}
  \pi^*L\cdot \pi^*G=\deg\pi^*\ell\cdot\deg\pi^*g=\deg_w\ell\cdot\deg_w g=r\cdot \deg_w g
    \end{equation}
   
 In case $\char k=0$ or $\char k=p>0$ and $p\not| r\cdot s$, the morphism $\pi$ is \'etale around
 $Q$. 
 Therefore, $\pi^{-1}(Q)$ consists of $r^2\cdot s$ closed points $\{Q_i\mid {i=1, \ldots, r^2\cdot s}\}$ whose analytic neighborhoods of
 $\pi^*G$ and $\pi^*L$ are isomorphic  to those of $G$ and $L$ at $Q$, respectively.  
 Then, by (\ref{bezout1}) we obtain
 $$r^2\cdot s\cdot\ord_Q( g\mid_{L})=\sum_{i=1}^{r^2s}\ord_{Q_i}(\pi^*g\mid _{\pi^*L})\leq  \pi^*L\cdot \pi^*G=r\cdot \deg_w g,$$
 which yields the required inequality.
   
   In case $p | r$, denote $r=p^e\cdot q$ ($\gcd(p,q)=1$).
   Then, the fiber $\pi^{-1}(Q)$ consists of $q^2\cdot s$ closed points, as a topological space.
   For a closed point $Q_i$ ($i=1,\ldots, q^2\cdot s$) in the fiber $\pi^{-1}(Q)$ we obtain
   $$\m_{Q}\o_{\bP^2}\subset \m_{Q_i}^{p^e},$$
   where $\m_{Q}$ and $\m_{Q_i}$ are the maximal ideals of $Q\in \bP(r,r,s)$ and of $Q_i\in \bP^2$, respectively.
   Let   $C\subset \bP^2$ be the subscheme with the reduced structure  of $\pi^*L$.
   Then, we have 
   $$\m_{L, Q}\o_C\subset \m_{C,Q_i}^{p^e},$$
   where $\m_{L,Q}$ and $\m_{C,Q_i}$ are the maximal ideals of $Q\in L$ and of $Q_i\in C$, respectively.
   Therefore,  for every $i=1,\ldots, q^2\cdot s$ it follows
   $$p^e\cdot\ord_Q (g\mid L)\leq \ord_{Q_i}(\pi^*g)\mid _C.$$
     Now, there are $q\cdot s$ points $Q_i$ 
   lying on $C$.
   Then, by Bezout's theorem on $\bP^2$ for $C$ and $\pi^*G$, we obtain
   $$q\cdot s\cdot p^e\ord_Q (g\mid _L)\leq q\cdot s\cdot\ord_{Q_i} (\pi^*g)\mid_ C\leq C\cdot\pi^*G=\deg_w g.$$
   Here noting that $q\cdot s\cdot p^e=r\cdot s$, this is the required inequality.
   
   In case $p | s$, the proof is similar.  
\end{proof}

\section{Squeezed Systems and Squeezed Blow-Ups}

Let $A$ be a variety of dimension  $N\geq2$ over an algebraically closed field $k$.
\begin{defn}\label{sque}
  Let $P\in A$ be a smooth point (not necessarily closed),
  $K$ the residue field, 
  and 
   $E$ a prime divisor over $A$ with the  center at $P$. 
   Denote the algebraic closure of $K$ by $\ok$.
   An RSP $\{x_1,\ldots, x_c\} $ of $\ok\widehat\o_{A,P}$ at  the closed point is called a 
   {\sl  squeezed system }  for $E$ at $P$,
   if  $v_i:=v_E(x_i)$ ($i=1,\ldots, c$) satisfy   
   \begin{enumerate}
   \item $v_1=\cdots=v_{c-1}\leq v_c$,
   \item $v_1:=\min\{v_E(x)\mid x\in \m\setminus \m^2\}$
   \item $v_c:=\max\{v_E(x)\mid x\in \m\setminus \m^2\},$
 
     \end{enumerate}
     where $\ok\widehat\o_{A,P}$ is the extension of the coefficient field $K$ of the formal power series ring $\o_{A,P}$
     to $\ok$, and 
      $\m \subset \ok\widehat\o_{A,P}$ is the maximal ideal.
     
  In this case,    $$v':=({v_1}',\ldots, {v_c}')=\frac{(v_1,\ldots,v_c)}{\gcd (v_1,\ldots,v_c)}$$
   is called a {\sl  squeezed weight}  for $E$ at $P$.
   
     Let $E$ and $v'=(v'_1,\ldots, v'_c) $ be as above.
   In this case, we call $E$ a prime divisor of squeezed type $v'$.
 \end{defn}
 
 Note that the  squeezed weight for $E$ is determined by a prime divisor but  squeezed system is not uniquely
 determined by the prime divisor $E$.

\begin{rem}
  For every $A, P$ and $E$  as in Definition \ref{sque}, there exists a  squeezed system of $\ok\widehat\o_{A,P}$.
  Indeed, it is obvious that there is $x_1\in\m\setminus \m^2$  
  such that $v(x_1)$ is the minimal value among 
  $\{v_E(x)\mid x\in \m\setminus \m^2\}$.
  Existence of the maximal  $v(x_c)$ among the set is proved by Zariski's subspace theorem
   (cf. \cite[(10.6)]{aby} ).
   Now, we extend $\{x_1,x_c\}$ to an  RSP $\{x_1,x_2,\ldots,x_c\}$ of $\o_{A,P}$.
   Here, if $v_E(x_i)> v_E(x_1)$ for $2\leq i \leq r-1$, replace $x_i$ by $x_1+x_i$.
   Then, we obtain a  squeezed system $\{x_1,x_2,\ldots,x_c\}$.
\end{rem}

Actually in \cite{kawk1} and \cite{ip}, the proofs of Theorem \ref{prototype}  show the following:
\begin{exmp}[Theorem \ref{prototype}]
      For every prime divisor $E$ over a smooth surface $A$ with the center at $0$
      such that $a(E; A, \a)\geq 0$ for an $\bR$-ideal $\a$ on $A$.
      Then, the exceptional divisor $E_1$ obtained by a squeezed blow-up for $E$ satisfies
         $$a(E; A,\a)\geq a(E_1;A,\a).$$
\end{exmp}

\begin{defn}\label{def.st.blowup}
  Let $A, P$ and $E$ as above and let $\{x_1,\ldots,x_c\}$ be a 
    squeezed system for $E$ and $v'=(v'_1,\ldots,v'_r)$ be the  squeezed weight.
   We call the weighted blow-up of weight $v'$ with respect to the coordinate system
   $\{x_1,\ldots,x_c\}$  a {\sl  squeezed blow-up} for $E$.
\end{defn}

\begin{rem}\label{not.st.rem}
   As in the definitions, a squeezed system is a RSP in the  local ring with extended coefficient field.
   A squeezed system is not in general a RSP of the original local ring $\o_{A,P}$.
\end{rem}
 
  \begin{exmp}\label{not.stan.exmp}
   Let $A_K:=\spec K[[y,z]]$ and  $A_\ok:=\spec \ok[[y,z]]$, where $\ok$ is the algebraic closure
  of $K$.
  Take an element $a\in \ok\setminus K$ and let $\phi\in K[T]$ be the minimal polynomial of $a$.
  Let $\varphi_1:A_1\to A_K$ be the usual blow-up at the closed point of $A_K$.
  Then the exceptional divisor $E_1$ is the projective line $\bP_K^1$ with the homogeneous coordinates $\{y,z\}$.
  Denote the homogenized polynomial of $\phi$ by $\Phi(y,z):=z^{\deg \phi}\phi(y/z)$.
  Take the blow-up $\varphi_2: A_2\to A_1$ with the center at the closed subscheme $C$ defined by the ideal $(\Phi(y,z))$
  on $E_1$.
    As the proper transforms of any curves defined by  linear forms $\ell=cy+dz=0$ ($c,d\in K$) on $A_1$ do not intersect to $C$,
  it follows $v_{E_2}(\ell)=1$.
  Therefore, every RSP $\{ f_1, f_2\}$ of $K[[y,z]]$  satisfies $v_E(f_1)=v_E(f_2)=1$.
   
  On the other hand,  take the base change $\psi: A_\ok\to A_K$ by the field extension $\ok\supset K$.
  Let $z':=y-az\in \ok[[y,z]]$. Then, the proper transform of the curve defined by $z'=0$ contains the point $(a:1)\in \bP_\ok^1=\oe_1$
  where $\oe_1$ is the exceptional divisor of the blow-up at the closed point of $A_\ok$.
  As $(a:1)\in \oe_1$ satisfies $\Phi(y,z)=0$, the proper transform of $z'=0$ intersects the center of the second blow-up
  induced from $\varphi_2$. 
  One can see that $v_E(z')>1$, and therefore a squeezed system cannot be taken from $K[[y,z]]$.

  \end{exmp}

        Now we are going to define ``general" ideal.  
\begin{defn}\label{defofbad}
     Let $E$ be a prime divisor over  $A$ of squeezed type $(v'_1,v'_2, v'_3)$ 
    (note that $v'_1=v'_2$) and let $E_1$ be  the exceptional divisor obtained by the  squeezed blow-up
    with respect to a  squeezed system
     $\{x_1,x_2,x_3\}$.

     An irreducible curve $B\subset E_1=\bP(v'_1,v'_2, v'_3)$ with the following properties
     is called  a {\sl bad curve} for $E$  on $E_1$.
\begin{enumerate}
    
   \item[(1)]
        $B$ is a curve of degree $v'_1$ with respect to $(v'_1,v'_2, v'_3)$.
        (In the discussions on a weighted projective space, ``degree'' always means degree with respect to 
        $(v'_1,v'_2, v'_3)$, and it is sometimes denoted by $\deg_{v'}$.)
       \item[(2)]
           $B$ contains the center of $E$.
 \end{enumerate}
 
 \end{defn}

\begin{lem}\label{non-exist-bad}
Under the setting of Definition \ref{defofbad}, the following hold:
\begin{enumerate}
\item[(i)]
      A bad curve does not always exist.
      More precisely a bad curve does not exist if and only if one of the following holds:
\begin{enumerate}
       \item The  squeezed weight is $(1,1,1)$, or
       \item  The  squeezed weight  $(v'_1,v'_2,v'_3)$  satisfies  $v'_1<v'_3$
  and the center of $E$ on $A_1$ is a curve of  $\deg_{v'}\ >v'_1$ on $E_1\simeq \bP(v'_1,v'_2,v'_3)$, or 
       \item  $E=E_1$.
\end{enumerate}
 \item[(ii)]
 If a bad curve exists, then it is unique in $E_1$.
 \end{enumerate}
 \end{lem}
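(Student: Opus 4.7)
My plan is a unified case analysis on the standard weight, which is of the form $(v'_1, v'_1, v'_3)$ with $v'_1 \le v'_3$ and $\gcd(v'_1, v'_3) = 1$. When $v'_1 < v'_3$, the only monomials of weighted degree $v'_1$ are $x_1$ and $x_2$, so any candidate bad curve has the form $V(\alpha x_1 + \beta x_2)$ for some $[\alpha : \beta] \in \bP^1$; in the $(1,1,1)$ case, a bad curve would be an arbitrary line in $E_1 = \bP^2$. Moreover, $v'_1$ is the minimum weighted degree of an irreducible curve in $\bP(v'_1, v'_1, v'_3)$. For the non-existence direction of (i), case (c) is immediate (no curve contains the surface $E_1$), and case (b) is a degree obstruction, since an irreducible curve of degree $v'_1$ cannot contain an irreducible curve of strictly larger degree. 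For case (a), the equality $v_c = v_1$ combined with the extremal definitions of $v_1$ and $v_c$ forces $v_E(\ell) = v_1$ for every $\ell \in \m \setminus \m^2$, so no linear form on $E_1 = \bP^2$ vanishes on the center $Z$ of $E$; thus $Z$ is neither a point (which would lie on a pencil of lines) nor a line, and $Z$ is either $E_1$ or an irreducible curve of degree $\ge 2$, in neither of which a line is contained.

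For the existence direction of (i) and for the uniqueness (ii), I assume $v'_1 < v'_3$, $E \ne E_1$, and the center is not a curve of degree $> v'_1$. Then the center on $E_1$ is either a closed point $P$ or an irreducible curve $C$ with $\deg_w C = v'_1$ (forced by the minimum-degree observation), and in the curve case $C = V(\alpha x_1 + \beta x_2)$ is already the unique bad curve. Both existence and uniqueness then reduce to the single claim that the center of $E$ on $A_1$ is not the point $P_0 := [0:0:1]$: if $P \ne P_0$ then the unique bad curve through $P = [a:b:c]$ is $V(-bx_1 + ax_2)$, and two distinct bad curves $V(\ell_1), V(\ell_2)$ with $\ell_i \in \langle x_1, x_2 \rangle$ would meet only in $V(x_1, x_2) = \{P_0\}$, forcing the common center into $\{P_0\}$.

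The main obstacle is ruling out $P_0$ as the center of $E$ on $A_1$. My plan is to exhibit a function that is a unit at $E$ but lies in $\m_{A_1, P_0}$, namely $u := x_1^{v'_3}/x_3^{v'_1} \in K(A)^\times$. Writing $d := \gcd(v_1, v_2, v_3)$ so that $v_E(x_i) = d v'_i$, a direct computation gives $v_E(u) = v'_3 \cdot d v'_1 - v'_1 \cdot d v'_3 = 0$. To see that $u$ vanishes at $P_0$ I pass to the toric cover of $A_1$ near $P_0$: with local coordinates $(s, y_1, y_2)$ carrying the $\mu_{v'_3}$-action $\zeta \cdot (s, y_1, y_2) = (\zeta s, \zeta^{-v'_1} y_1, \zeta^{-v'_1} y_2)$, one has $x_1 = s^{v'_1} y_1$ and $x_3 = s^{v'_3}$, whence $u = y_1^{v'_3}$, which is $\mu_{v'_3}$-invariant and vanishes at the origin. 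Thus $u \in \m_{A_1, P_0}$ while $v_E(u) = 0$, contradicting the defining property of the center.
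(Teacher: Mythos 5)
Your proof is correct and follows essentially the same route as the paper's: the degree obstruction for case (b), the valuation--maximality obstruction for case (a) (resting on the same key fact, proved as inequality (\ref{oftenuse}) in the paper, that a linear form whose zero locus on $E_1$ contains the center of $E$ satisfies $v_E(\ell)>v_1$), and the explicit form $bX_1-aX_2$ for existence and uniqueness when $v'_1<v'_3$. The one place you go beyond the paper's write-up is in proving directly that the center avoids $[0:0:1]$ via the invariant rational function $u=x_1^{v'_3}/x_3^{v'_1}$ with $v_E(u)=0$; the paper leaves this point implicit in the present proof and supplies it only later as Lemma \ref{2nd}, by the equivalent computation comparing $v_E(x_1)$ with $v_E(x_i)$, so your treatment is a self-contained substitute for that lemma.
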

 
 \begin{proof}
   It is clear that if $E=E_1$, then the center of $E$ on $E_1$ is the generic point, so there is no bad curve on $E_1$.
   We exclude this trivial  case in the following discussions.
  In case the  squeezed blow-up is the usual blow-up, then the exceptional divisor does not have
a bad curve.
    Because if $B$ is a bad curve, it is defined by linear form $\ell=\sum_i a_iX_i=0 $ with $a_3\neq 0$, where $\{X_1,X_2,X_3\}$
    is the projective coordinate system on $E_1=\bP^2$ corresponding to the  squeezed system $\{x_1,x_2,x_3\}$ on $\o_{A,0}$.
    This is a contradiction to the fact that $(1,1,1)$ is the  squeezed system , as we obtain another RSP $\{x_1,x_2, \ell(x_1)\}$ such that \begin{equation}\label{oftenuse}
    v_E(x_1)<v_E(\ell(x_i)). 
  \end{equation}
  Here, we give the proof of this inequality, as this kind of discussion is used frequently in this paper.
    
  Let $\varphi_1:A_1\to A$ be the  squeezed blow-up and $\psi :\widetilde A\to A_1$ a birational morphism on which 
  $E$ appears.
  Denote the composite $\varphi_1\circ \psi $ by $\varphi$.
  Let $D$ be the proper transform of $Z(\ell(x_i))\subset A$ in $A_1$, then  $D\cap E_1 $ contains the center of $E$
  on $A_1$ by the assumption.
 Note that we can express 
     $$
    (\varphi_1^*\ell(x_i))=r E_1+D,\ \ (r=v_{E_1}(\ell(x_i)))
    $$
   
   Here, we remind us that $v_E(\ell(x_i))$ is the coefficient of the divisor 
     $(\varphi^*\ell(x_i))=\psi^*(rE_1+D)$ at   the component $E$.
     The center of $E$ on $A_1$ is contained in $D$, therefore the contribution from $\psi^*(D)$ to
     $v_E(\ell(x_i))$ is positive. 
     Therefore, $v_E(\ell(x_i))> r v_E(E_1)=v_{E_1}(\ell(x_i))v_E(E_1)=v_E(x_1)$
     This shows the inequality (\ref{oftenuse}).

For the case where $E_1$ is an exceptional divisor of a  squeezed blow-up with respect to $(v'_1,v'_2, v'_3)$ with $v'_1<v'_3$,
if the center $C$ of $E$ on $E_1$ is a curve of degree $>v'_1$, then there is no bad curve.
Because, a curve of degree $v'_1$ cannot contain a curve of degree $>v'_1$.
This gives the proof of ``if" part of (i).

 Assume a bad curve exists on $E_1$.
When the center of $E$ on $E_1$  is a curve, then it should coincide with the bad curve by the definition, 
therefore the center should be of degree $v'_1$.
When the center of $E$ on $E_1$ is a closed point $P$,  then a bad curve should contain $P$.
Express the point $P$ by the homogeneous coordinates $(a,b,c)$ with $a,b,c\in k$.
Then a curve of degree $v'_1$ containing $P$ is defined by $bX_1-aX_2=0$.
Now we obtain  the uniqueness of the
bad curve on $E_1$.
This completes  the proof of ``only if'' part of (i) and the proof of (ii).
 \end{proof}


\begin{defn}\label{defofgeneral}
   Let $E$ be a prime divisor over a smooth variety $A$ with the center at a closed point $0$. 
   An $\bR$-ideal $\a$ is called {\sl general for} $E$ 
   if   there exists a    squeezed blow-up  $ A_1\to A$ for $E$ with the exceptional divisor $E_1$
   satisfying the following:
    \begin{enumerate}
 \item[(1)]  $\ord_B\a_{A_1}\o_{E_1}\leq 1$,  
  where $B$ is the bad curve on $E_1$ and $\a_{A_1}$ is the weak transform of $\a$ at $A_1$.
  If there is no bad curve on $E_1$, then we account it as the inequality  automatically holds.
  
    \item[(2)]  In addition, if $a(E;A,\a)<a(E_1;A,\a)$ and the center $P$ of $E$ on  $A_1$ is  a smooth
     closed point,
           then there exists a  squeezed blow-up $A_2\to A_1$ for $E$ at $P$.
            Let $E_2$ be the exceptional divisor.
            Then,  $\ord_{B'}I_L\a_{A_2}\o_{E_2}\leq 1$,   
             where $B'$ is the bad curve on $E_2$, $\a_{A_2}$ is the weak transform of $\a$ at $A_2$
             and $I_L$ is the defining ideal of the intersection $L:=E_2\cap E'_1$ in $E_2$.
             Here, $E'_1$ is the proper transform of $E_1$ on $A_2$.
                         If there is no bad curve on $E_2$, then we account it as  the inequality  automatically holds.
 \end{enumerate}   
 
     We say that a pair $(A,\a)$ is {\sl general} if the $\bR$-ideal $\a$ is general for a prime divisor computing 
     $\mld(0;A,\a)$.
     Here, the  weak transform $\a_{iA_2}$ of an ideal $\a_i\subset \o_A$ on $A_2$ is defined as
     $$\a_i\o_{A_2}=\a_{iA_2}\o_{A_2}(-v_{E_1}(\a_i)E_1-v_{E_2}(\a_i)E_2).$$
     The weak transform $\a_{A_2}$ of an $\bR$-ideal $\a$ on $A$  is defined as the canonical extension of the one for an 
     ideal of $\o_A$ (see, for example \cite{kawk1}).

 \end{defn}
 
 \begin{rem} In (2), we assume smoothness of the center  $P$ of $E$ on $A_1$.
      But it turns out that it always holds by Lemma \ref{2nd}.
 \end{rem}
 
 \begin{rem}
    The definition of generality of an $\bR$-ideal is rather complicated.
  However, one can see that under a fixed exponent, the inequalities of orders at specific curves of $E_1$ and $E_2$
  are open conditions in the space of regular functions of $A$, 
  which is the reason why we call the ideal $\a$ ``general''.
    The following gives a sufficient condition for generality of the ideal.
    
    Under the same symbols as in Definition \ref{defofgeneral}, 
     the $\bR$-ideal $\a$ is general for $E$ if one of the following hold:
    \begin{enumerate}
 \item[(1)] There is no bad curve on $E_1$ or $E_2$.
 
 \item[(2)] Assume the bad curves $B\subset E_1$ and $B'\subset E_2$ exist.
  $\ord_B\a_{A_1}\o_{E_1}=0$, and    $\ord_{B'}\a_{A_2}\o_{E_2}=0$.
  \end{enumerate}   
 
 \end{rem}


\section{Proofs of the main results}

   For the proofs of the main theorems we need the following lemma which guarantees that the second  weighted 
   blow-up is possible.

\begin{lem}\label{2nd} 
  Let $E$ be a prime divisor over a smooth $N$-fold $A$ ($N\geq 2$) with the  center at the closed point $0$.
 Let 
 $\{x_1,\ldots, x_N\}$ be a RSP at $0$.
 Let
  $v_i:=v_E(x_i)$, $v:=(v_1,\ldots,v_N)$ and define 
 $$v':=({v_1}',\ldots, {v_N}')=\frac{(v_1,\ldots,v_N)}{\gcd v}$$
 Let
 $\varphi_1: A_1\to A$ be the weighted blow-up with respect to $\{x_1,\ldots, x_N\}$ 
 with weight $v'$.
 Denote the exceptional divisor of $\varphi_1$ by 
 $E_1$.
 Assume $E\neq E_1$ and let
 $C$ be the center  of $E$ on $A_1$ and $P\in C$ the generic point of $C$.
 
 Then, $$P\in E_1 \setminus \left\{\bigcup (X_i=0)\right\}\subset E_1=\bP(v'_1, \ldots, v'_N),$$
 where $X_i$ is a homogeneous coordinate function corresponding to $x_i$.
  In particular, $P $ is  smooth on $A_1$ and also on $E_1$.
\end{lem}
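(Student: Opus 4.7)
The plan is to exploit the pullback of the coordinate functions $x_i$ through $\varphi_1$ in order to pin down the location of the center of $E$ on $A_1$. To begin, I would record the divisor identity on $A_1$
$$\varphi_1^*(x_i) = v'_i\, E_1 + \tilde Z_i,$$
where $\tilde Z_i$ is the proper transform of $Z(x_i) \subset A$. Applying $v_E$ and setting $a := v_E(E_1)$ and $b_i := v_E(\tilde Z_i) \ge 0$, I obtain
$$v'_i d = v_i = v'_i a + b_i,$$
with $d := \gcd(v_1, \ldots, v_N)$, whence $b_i = v'_i(d - a)$. Since $v'_i > 0$ and $b_i \ge 0$, this forces $d \ge a$; moreover either all $b_i$ vanish (when $d = a$) or every $b_i$ is strictly positive (when $d > a$).

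Next, I would verify that the proper transforms have empty global intersection: $\bigcap_{i=1}^N \tilde Z_i = \emptyset$ in $A_1$. Outside $E_1$ this is immediate from $\bigcap_i Z(x_i) = \{0\}$ in $A$ together with the fact that $\varphi_1$ is an isomorphism over $A \setminus \{0\}$. Inside $E_1 = \bP(v'_1, \ldots, v'_N)$, a standard chart computation for the weighted blow-up (in the $j$-th chart one has $x_j = y_j^{v'_j}$ and $x_k = y_j^{v'_k} y_k$ for $k \ne j$) identifies $\tilde Z_i \cap E_1$ with the coordinate hyperplane $(X_i = 0)$, and $\bigcap_i (X_i = 0) = \emptyset$ in weighted projective space.

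Combining these, if $d > a$ the nonempty center of $E$ on $A_1$ would have to sit inside every $\tilde Z_i$, contradicting $\bigcap_i \tilde Z_i = \emptyset$. Hence $d = a$, all $b_i = 0$, and the center of $E$ is not contained in any $\tilde Z_i$. In particular, the generic point $P$ avoids each $\tilde Z_i \cap E_1 = (X_i = 0)$ and therefore lies in $E_1 \setminus \bigcup_i (X_i = 0)$. On this open locus all homogeneous coordinates are nonzero, so the cyclic isotropy groups governing $\bP(v'_1, \ldots, v'_N)$ (and the corresponding open of $A_1$) are trivial, whence $P$ is smooth on both $A_1$ and $E_1$. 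The main obstacle is really just the chart-level identification $\tilde Z_i \cap E_1 = (X_i = 0)$ that underlies $\bigcap_i \tilde Z_i = \emptyset$; once that is in hand, the rest is bookkeeping of valuations.
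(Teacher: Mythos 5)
Your argument is correct and is essentially the paper's own proof: both rest on the decomposition $\varphi_1^*(x_i)=v'_i E_1+\tilde Z_i$, the resulting identity $v_i=v'_i\,v_E(E_1)+v_E(\tilde Z_i)$, and the fact that the coordinate hyperplanes of $\bP(v'_1,\ldots,v'_N)$ have empty common intersection. The paper phrases it as a direct contradiction (pick one $X_i$ not vanishing at $P$ and one that does, and compare with the fixed ratio $v_1:v_i=v'_1:v'_i$), whereas you package the same computation as ``all $b_i=v'_i(d-a)$ vanish simultaneously or are all positive''; the content is the same.
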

\begin{proof} Assume that the statement does not hold, then we may assume that $P$ is in the hyperplane defined 
    by $X_1=0$ in $E_1=\bP(v')$.
    There exists at least one homogeneous coordinate function $X_i$ such that $P$ does not lay in the hyperplane
    defined by $X_i=0$.
    Then we obtain
 \begin{enumerate}
  \item[] $v_E(x_i)=v_{E_1}(x_i)\cdot v_E(E_1)=v'_i\cdot v_E(E_1)$.
  \item[] $v_E(x_1)=v_{E_1}(x_1)\cdot v_E(E_1)+\ord_P X_1\geq v'_1\cdot v_E(E_1)+1.$
  \end{enumerate} 
This is a contradiction to the fact that 
$$v_E(x_1):v_E(x_i)= v'_1:v'_i.$$
\end{proof}

   The following lemma is a basic idea appeared in \cite{kawk1}.
   
   \begin{lem}\label{ord}
      Let $\a$ be an $\bR$-ideal on $A$ with $a(E;A, \a)\geq 0$.
      Let $A'\to A$ be a proper birational morphism with normal $A'$, and $D$ 
     an irreducible divisor  on $A'$ with the same center on $A$ as that of $E$.
    Assume $a(D;A, \a) > a(E;A, \a)$
    and the generic point $P$ of the center of $E$ on $A'$ is smooth
    and not contained in the other exceptional divisors for $A'\to A$.
    
    Then, we have 
    $$\mld(P; D, \a_{A'}\o_{D})<0, \ \ \mbox{in\ particular}$$
    $$\ord_P\a_{A'}\o_{D}>1,$$
    where $\a_{A'}$ is a weak transform of $\a$ on $A'$. 
     \end{lem}
 
 \begin{proof} 
        First we express the log discrepancy at $E$ as follows:
\begin{equation}\label{expression}
 \begin{array}{ll}
  a(E;A, \a)&=k_{E/A}+1-v_E(\a)\\
                     &=k_{E/{A'}}+k_{D/A}\cdot v_E(D)+1-v_{D}(\a)\cdot v_E(D)
                       -v_E(\a_{A'})\\
                      &=a(E; A', I_{D}\cdot\a_{A'})+v_E(D)\cdot a(D; A,\a),\\ 
 \end{array}
 \end{equation}
   where $k_{E/A'}$ is the coefficient of the relative canonical divisor
    $K_{{\widetilde A}/{A'}}$ at $E$
   and $I_{D}$ is the 
   defining ideal of $D$ in $A'$.
   Then, by the assumption, it follows
   $a(E; A',I_{D}\cdot \a_{A'})<0$ and therefore we obtain 
   $$\mld(P; A', I_{D}\cdot \a_{A'})=-\infty.$$
   By Inversion of Adjunction (\cite{EM}, \cite{mj-p}) we obtain
   $\mld(P; D, \a_{A'}\cdot\o_{D})=-\infty$.
   Hence, it follows $\ord_P (\a_{A'}\cdot\o_{D})>1$ as claimed.
 \end{proof}

 {\bf  Setting for the proof of Theorem \ref{main}.}
  
   Let $E$ be a prime divisor over a smooth $3$-fold $A$ with the center at a closed point $0$.
   Let $\a$ be a general $\bR$-ideal on $A$  such that $a(E; A, \a)\geq0$.
  Let $$\varphi_1: A_1 \to A$$ 
  be a  squeezed blow-up for $E$ satisfying the condition (1) in Definition \ref{defofgeneral}.
  Let the squeezed system $\{x_1,x_2,x_3\}$ and 
   the weight $v'=(v_1',v'_2,v'_3)$ correspond to the squeezed blow-up $\varphi$
  (note that $v_1'=v_{2}'$).
  Denote the exceptional divisor for $\varphi$ by $E_1$.
   If $a(E_1;A, \a)\leq a(E;A, \a)$, then $E_1$ is the required prime divisor $F$ in the theorem.
 Therefore, from now on, we assume that  the inequalities $a(E_1;A, \a) > a(E;A, \a)\geq 0$ hold.

 \begin{lem}\label{E1}  
     Let $A$, $E$ and $ E_1$ be  as above.
       If $\a$ is general for $E$ and  the inequalities $a(E_1;A, \a)> a(E;A, \a)\geq 0$ hold, 
        then we obtain the following: 
    \begin{enumerate}
    \item[(i)] $0<a(E_1;A,\a)<1$.
    \item[(ii)]  $v'=(1,1,n) $ with $n\geq 1$ or $v'=(2,2,3)$.
    \begin{enumerate}
      \item In case $(1,1,n) $ the center of $E$ on $A_1$ is a curve  in $E_1=\bP(1,1,n)$ of degree $n+1$.
      \item In case $(2,2,3)$ the center of $E$ on $A_1$ is either a curve of degree 6 or a closed point in $E_1=\bP(2,2,3)$.

    \end{enumerate}

    \end{enumerate}

 \end{lem}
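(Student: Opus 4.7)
The plan is to combine Lemma \ref{ord}, Lemma \ref{bezout}, and the generality condition. Set $(a,a,b) := v'$ with $\gcd(a,b) = 1$ and $a \leq b$, so that $E_1 = \bP(a,a,b)$ and $a(E_1;A,\a) = 2a + b - v_{E_1}(\a)$; decompose $\a_{A_1}\o_{E_1} = \prod g_i^{e_i}$ with each $g_i$ weighted homogeneous of degree $d_i$, so $v_{E_1}(\a) = \sum e_i d_i$. Applying Lemma \ref{ord} with $D = E_1$---whose hypotheses are supplied by Lemma \ref{2nd} (the generic point $P$ of the center of $E$ on $A_1$ is smooth and lies in the open chart $X_1X_2X_3 \neq 0$) and by the assumption $a(E_1;A,\a) > a(E;A,\a)$---yields $\ord_P \a_{A_1}\o_{E_1} > 1$.

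I would upper-bound $\ord_P \a_{A_1}\o_{E_1}$ by splitting on the type of the center. If the center is an irreducible curve $C$ of weighted degree $d_C$, then $d_C > a$ (otherwise $C$ is the bad curve and generality forces $\ord_C \a_{A_1}\o_{E_1} \leq 1$, contradicting the above); divisibility by a defining equation of $C$ gives $\ord_C g_i \leq d_i/d_C$, while Lemma \ref{bezout} applied at a general closed point of $C$ (avoiding coordinate hyperplanes and the finitely many weight-$a$ lines containing some $g_i$) gives $\ord_C g_i \leq d_i/(ab)$, whence $v_{E_1}(\a) > \max(d_C, ab)$. If instead the center is a closed point $P$, the unique weight-$a$ form $\ell$ vanishing at $P$ cuts out the bad curve $B$; writing $g_i = \ell^{m_i}\tilde g_i$ with $\ell \nmid \tilde g_i$ and summing the Bezout bounds gives
\[
1 \;<\; \sum_i e_i \ord_P g_i \;\leq\; \frac{b-1}{b}\,\ord_B \a_{A_1}\o_{E_1} \;+\; \frac{v_{E_1}(\a)}{ab},
\]
which with generality $\ord_B \a_{A_1}\o_{E_1} \leq 1$ yields $v_{E_1}(\a) > a$ in general, and sharpens to $v_{E_1}(\a) > ab$ when no $g_i$ is divisible by $\ell$. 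The sharp inequality $ab < v_{E_1}(\a) < 2a + b$ rearranges to $(a-1)(b-2) < 2$ and, with $1 \leq a \leq b$ and $\gcd(a,b) = 1$, pins down $(a,b) \in \{(1,n) \mid n \geq 1\} \cup \{(2,3)\}$, giving (ii). Within these allowed weights, matching $v_{E_1}(\a) > d_C$ against the valid weighted degrees in $\bP(a,a,b)$---together with the observation that a strictly smaller $d_C$ would let one replace $x_3$ in the standard system by an element of strictly larger $v_E$-value, contradicting the maximality built into standardness---singles out $d_C = n+1$ in case $(1,1,n)$ and $d_C = 6$ in the curve-center subcase of $(2,2,3)$, while the closed-point subcase of $(2,2,3)$ is already covered by the sharp bound $v_{E_1}(\a) > 6$. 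In each allowed configuration one then has $v_{E_1}(\a) > 2a + b - 1$, giving (i).

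The main obstacle is the closed-point subcase in which the bad curve divides some $g_i$, so $\ord_B \a_{A_1}\o_{E_1}$ is close to $1$ and Bezout alone delivers only the weak bound $v_{E_1}(\a) > a$---far from what (i) requires. Handling this delicate configuration will require either exploiting the stronger ``$\mld = -\infty$'' content of Lemma \ref{ord} applied to the surface singularity $(E_1, \a_{A_1}\o_{E_1})$ at $P$, or invoking the second-blow-up condition of Definition \ref{defofgeneral}(2) together with a further standard blow-up at $P$ to exclude this configuration outside the weights listed in (ii). I expect this is the technically most delicate step of the proof.
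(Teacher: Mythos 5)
Your framework (Lemma \ref{ord} to get $\ord_P\a_{A_1}\o_{E_1}>1$, generality to kill the bad-curve contribution, Bezout on $\bP(a,a,b)$ to bound the order from above, and the arithmetic $(a-1)(b-2)<2$) is exactly the paper's, and your curve-centre case is essentially complete and matches the paper. But the closed-point case, which you yourself flag as the "main obstacle," is a genuine gap, and it is precisely where the paper's key move lives. Your pointwise estimate $\sum e_i\ord_P g_i\le \ord_B\a_{A_1}\o_{E_1}+v_{E_1}(\a)/(ab)$ together with $\ord_B\le 1$ only gives $v_{E_1}(\a)>0$-ish information and cannot rule out, say, weight $(3,3,4)$ with the ideal concentrated along the bad curve. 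The paper's resolution is the first alternative you floated: writing the divisor of a general element as $\alpha B+\sum\gamma_jC_j$ with $\alpha\le 1$ by generality, it \emph{increases} $\alpha$ to $1$, so that $-\infty=\mld(P;E_1,I_B^\alpha\prod I_{C_j}^{\gamma_j})\ge\mld(P;E_1,I_B\prod I_{C_j}^{\gamma_j})=\mld(P;B,(\prod I_{C_j}^{\gamma_j})\o_B)$ by adjunction onto the smooth curve $B$; hence $\ord_P(\prod I_{C_j}^{\gamma_j})\o_B>1$, and \emph{then} Lemma \ref{bezout} applied on $B$ gives $1<\sum\gamma_j\deg_{v'}C_j/(v'_1v'_3)\le(2v'_1+v'_3)/(v'_1v'_3)$, which forces $(2,2,3)$ and $v_{E_1}(\a)>6$. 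In other words, the bad-curve part is not estimated at all; it is absorbed into the adjunction step, and Bezout is applied only to the residual on $B$.

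A second, smaller omission: to get (ii)(a) you must exclude a closed-point centre when the weight is $(1,1,n)$, and no Bezout-type bound does this. The paper's argument is coordinate-theoretic: if $P=(1:a:b)\in\bP(1,1,n)$ with $b\neq 0$, then $bx_1^{n}-x_3$ lies in $\m_0\setminus\m_0^2$ and has strictly larger $v_E$-value than $x_3$, contradicting the maximality built into the standard system. (This is the same trick you correctly invoke to force $\deg\ell=n+1$ in the curve case, but you need it again here.) With these two repairs your outline becomes the paper's proof.
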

 
 \begin{proof}
 Let $f^e=f_1^{e_1}\cdots f_r^{e_r}\in \a$ be a general element, {\it i.e.,} $v_{E_1}(\a)=\sum_i e_i\cdot 
   \deg_{v'}(\ini_{v'} f _i)$, where $\ini_{v'} f$ is the initial part of $f$ with respect to the weight $v'$.

    We divide the proof into two cases according to the dimension of the center of $E$ on $A_1$.
    Let $P\in A_1$ be the generic point of the center  of $E$ on $A_1$.
   
 {\bf  Case 1.  $\dim\overline{\{P\}}=1$}.
 
   Let $C:=  \overline{\{P\}}$ defined by $\ell=0$ on $E_1=\bP(v')$, where $\ell$ is homogeneous of degree $\geq v'_1$
   with respect to the weight $v'$.

   The $\bR$-divisor on $E_1$ induced from a general element $f^e=f_1^{e_1}\cdots f_r^{e_r}$ is expressed as follows:
   $$\left(\prod \ini_{v'}f_i^{e_i}\right)=\alpha C +\sum_j \gamma_j C_j, \ \ \mbox{with}\ \ \alpha >1, \gamma_i \in \bR_{>0}$$ 
   Here, note that $\alpha>1$ follows from Lemma \ref{ord}.
   As $\a$ is general, $C$ is not a bad curve, therefore  its degree is greater than $v'_1$.
   Then, $\deg_{v'}\ell\geq v'_1v'_3$, because $\ell $ is an irreducible weighted homogeneous polynomial in $x_1,x_2,x_3$ of 
   weight $v'_1, v'_1,v'_3$  not contained in the coordinate hyperplanes in $E_1\simeq \bP(v')$.
   (Note that such a polynomial with smallest degree  is in the form $ax_1^{v'_3}+ bx_2^{v'_3}+c{x_3}^{v'_1}$.)
   Then, we have:
    $$v_{E_1}(\a)=\sum_i e_i\cdot 
   \deg_{v'}(\ini_{v'} f _i)=\deg_{v'}(\alpha C+\sum_j \gamma_j C_j)>\deg_{v'}C=\deg_{v'}\ell\geq v'_1v'_3.$$
   By the assumption $a(E_1;A,\a)> a(E; A,\a)\geq 0$, it follows 
\begin{equation}\label{<1}   
     0\leq a(E_1;A, \a)=2v'_1+v'_3-v_{E_1}(\a)<2v'_1+v'_3-v'_1\cdot v'_3.
\end{equation}
   The possibilities of $(v'_1,v'_1,v'_3)$ are only $(1,1,n)$ with $n\in \bN$ and $(2,2,3)$.
   In case $(2,2,3)$, by (\ref{<1}) 
    we have $a(E_1;A, \a))<2\cdot2+3-2\cdot3=1$.
    Then, in this case we have (i) and (b) of (ii).
    
    In case $(1,1,n)$ for $n\in \bN$, we have $\deg_{v'}\ell\geq n+1$.
    Indeed, if not, we have $\deg_{v'}\ell=n$ and $\ell=X_3+h(X_1,X_2)$ for a nonzero homogeneous polynomial 
    $h$ of degree $n$.
    As $E$ has the center at the curve $\ell=0$, in the same way as the proof of (\ref{oftenuse}) we have 
    $$v_E(x_3+h(x_1,x_2))>v_E(x_3),$$
    and also $x_3+h(x_1,x_2)\in \m_0\setminus \m_0^2$
    which is a contradiction to the maximality of $v_E(x_3)$.
    Therefore, in this case also we have $a(E_1;A, \a))<2+n-(n+1)=1$, which shows (i) and (a) of (ii).

  {\bf  Case 2. $\dim \overline{\{P\}}=0$}    
  
   We can take $P=(1: a: b)\in E_1=\bP(v')$ $(a,b\neq 0)$ as the homogeneous coordinate of the point $P$
   by Lemma \ref{2nd}.
   
   First we will show that $v'_1\neq 1$.
  To see this, assume that $v'_1=1$.
  Then a curve $bX_1^{v'_3}-X_3=0$ contains $P$, therefore 
  $$v_E(bx_1^{v'_3}-x_3)>v_E(x_3)=v_3,$$
  and also $bx_1^{v'_3}-x_3\in \m_0\setminus \m_0^2$ which is a contradiction to the maximality of  $v_E(x_3)$.
  
  Now we may assume that $v'_1\geq 2$.
  Then, of course $v'_1< v'_3$ and the curve $B$ defined by $aX_1-X_2=0$ contains $P$.
  Note that $B$ is the bad curve.
  
   Take a general element $f^e=f_1^{e_1}\cdots f_r^{e_r}\in \a$ 
   such that $v_{E_1}(\a)=v_{E_1}(f^e)=\deg_{v'}(\ini_{v'} f^e)$. 
  The $\bR$-divisor on $E_1=\bP(v')$ induced from a general element $f^e=f_1^{e_1}\cdots f_r^{e_r}$ is expressed as follows:
 \begin{equation}\label{generalelement}  
   \left(\prod \ini_{v'}f_i^{e_i}\right)=\alpha B +\sum_j \gamma_j C_j, \ \ \mbox{with}\ \ \alpha , \gamma_i \in \bR_{>0} 
\end{equation}
  By generality of $\a$, we have $\alpha\leq 1$.
   By Lemma \ref{ord}, we have $\mld(P; E_1, \a_{A_1}\o_{E_1})=-\infty$.
   By the description (\ref{generalelement})  of the divisor defined by a general element $f^e$,
   we have $$-\infty=\mld(P; E_1, \a_{A_1}\o_{E_1})=\mld(P;E_1,I_B^\alpha\cdot\prod_iI_{C_i}^{\gamma_i})\geq
   \mld(P;E_1,I_B\cdot\prod_iI_{C_i}^{\gamma_i})=\mld (P; B, (\prod_iI_{C_i}^{\gamma_i})\o_B).$$
    Hence, it follows $\ord_P  (\prod_iI_{C_i}^{\gamma_i})\o_B>1$.
    Applying Lemma \ref{bezout} to the curve $B$ of degree $v'_1$, we obtain
    $$1<\ord_P  (\prod_iI_{C_i}^{\gamma_i})\o_B\leq \frac{\sum \gamma_i \deg_{v'}C_i}{v'_1v'_3}\leq
   \frac{v_{E_1}(f^e)}{v'_1v'_3}\leq \frac{2v'_1+v'_3}{v'_1v'_3},$$
   Here, for the third inequality, we use 
    $$\sum\gamma_i\deg_{v'}C_i\leq v_{E_1}(f^e)-\alpha v_1'.$$
    Then, the only possibility of $v'$ satisfying these inequalities is $(2,2,3)$ and we also have $v_{E_1}(\a)=v_{E_1}(f^e)> 2\cdot 3$
    which completes the proof of (i) and (ii) in case $\dim \overline{\{P\}}=0$.
   
 \end{proof}
 
 \begin{cor}[Theorem \ref{mld1}] Let $A$ be a smooth variety of dimension 3 over an algebraically closed field $k$.
 For any general pair $(A,\a)$ with $\mld(0; A, \a)\geq 1$ 
  the minimal log discrepancy is computed by a prime divisor obtained by
   one weighted blow-up.
 \end{cor}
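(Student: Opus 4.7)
The plan is to reduce to the dichotomy provided by Lemma \ref{E1} and observe that the ``bad case'' there is incompatible with $\mld(0;A,\a)\geq 1$.

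First, I would use the remark in Section 2 (the one noting that for $N\leq 3$ a prime divisor in a log resolution always computes the mld) to pick a prime divisor $E$ over $A$ with center at $0$ such that $a(E;A,\a)=\mld(0;A,\a)\geq 1$. Because the pair $(A,\a)$ is general by hypothesis, we may choose $E$ so that $\a$ is general for $E$ in the sense of Definition \ref{defofgeneral}. Let $\{x_1,x_2,x_3\}$ be a standard system for $E$ at $0$ and let $\varphi_1:A_1\to A$ be the corresponding standard blow-up, with exceptional divisor $E_1$.

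Next I would compare $a(E_1;A,\a)$ with $a(E;A,\a)$. By the definition of minimal log discrepancy, $a(E_1;A,\a)\geq \mld(0;A,\a)=a(E;A,\a)\geq 1$. Suppose for contradiction that this inequality is strict, i.e.\ $a(E_1;A,\a)>a(E;A,\a)\geq 0$. Then the hypotheses of Lemma \ref{E1} are met (the standard blow-up is the one constructed from $E$, and $\a$ is general for $E$), so part (i) of that lemma gives $a(E_1;A,\a)<1$. Combined with $a(E_1;A,\a)>a(E;A,\a)\geq 1$, this is a contradiction.

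Hence $a(E_1;A,\a)=a(E;A,\a)=\mld(0;A,\a)$, and $E_1$, which is obtained by a single weighted blow-up, computes the mld. The ``main obstacle'' here is essentially already handled by Lemma \ref{E1}: the only subtle point is to check that our generality assumption on the pair is strong enough to allow us to choose a prime divisor $E$ computing the mld for which $\a$ is general, so that Lemma \ref{E1} is applicable; this follows directly from Definition \ref{defofgeneral} of a general pair.
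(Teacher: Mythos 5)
Your proposal is correct and is essentially the paper's own argument: the paper likewise observes that $a(E_1;A,\a)\geq\mld(0;A,\a)\geq 1$ forces, via Lemma \ref{E1}(i), that the strict inequality $a(E_1;A,\a)>a(E;A,\a)$ cannot hold, so the first standard blow-up already computes the mld. Your added care in choosing $E$ and the standard blow-up compatibly with Definition \ref{defofgeneral} matches the paper's implicit setup and introduces no new content.
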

  
\begin{proof} As $a(E_1;A,\a)\geq \mld(0; A,\a)\geq 1$,  
 the inequality $a(E_1;A, \a)> a(E;A, \a)$ does not hold by (i) in 
  Lemma \ref{E1}.
\end{proof}
  
  
  {\bf Proof of Theorem \ref{main}.}
  
  Let $A_1$, $E_1$ be as in the setting above.
  Assuming  $0\leq a(E;A,\a)< a(E_1;A,\a)$, 
  we will prove that $a(E;A,\a)\geq a(E_2;A,\a)$ for a divisor $E_2$ obtained by
  the second ``blow-up" constructed below in Case 1 and Case 2.
  
  Let $P\in E_1\subset A_1$ be the center of $E$.
First,  for every prime divisor $D$ over $A_1$ with the center at $P$
 and with the inequality $a(D;A,\a)>a(E; A,\a)\geq0$,
 we observe that
 \begin{equation}\label{a1}
 a(D;A_1, \a_{A_1})\geq 0. 
  \end{equation}
  Indeed, we have an expression of $a(D; A,\a)$ as follows:
  $$a(D; A,\a)=a(D; A_1,\a_{A_1})+v_D(E_1)(a(E_1;A,\a)-1).$$
  As $a(D; A,\a)\geq0$ and  $a(E_1;A,\a)-1<0$ ( Lemma \ref{E1} ),
  we have $a(D; A_1,\a_{A_1})\geq0$.

  {\bf Case 1.  $\dim \overline{\{P\}}=1$}

   Let  $\{y_1,y_2\}$ be a 
    squeezed system for $E$ on $A_1$ at $P$ and
  $E_2$ the prime divisor obtained by the   squeezed  blow-up of $A_1$ at $P$
  with respect to $\{y_1,y_2\}$.
  Let $K:=\o_{A_1,P}/\m_{A_1,P}$ and $\ok$ the algebraic closure of $K$.
   Let $A_{1K}:=\spec \widehat\o_{A,P}$, $A_{1\ok}:=\spec \ok\widehat\o_{A,P}=
   \spec \ok[[y_1,y_2]]$.
   Denote the both closed points of $A_{1K}$ and of $A_{1\ok}$ by $0$.
   Here, we note that $\{y_1,y_2\}$ is not necessarily a squeezed system on $A_{1\ok}$ for $\oe$ as is shown
   in Example \ref{not.stan.exmp}, but it does not matter.
   Because we are interested only in ideals which came from $A_1$ and in this case a squeezed system on $A_1$
   for $E$  works
   in the same way as in \cite{kawk1} and \cite{ip}, which one can see below:
 
   Let $\tilde A \to A_1$ be a log resolution of $(A_1, \a\o_{A_1})$ on which $E$ appears.
   Then, the base change $\tilde{\tilde A}\to A_{1\ok}$ by $A_{1\ok}\to A_1$ is also a 
   log resolution of $(A_{1\ok}, \a\o_{A_{1\ok}})$ on which the prime divisor $\oe$ corresponding to $E$ appears.
   Let $A_2\to A_1$ be the squeezed blow-up with respect to the squeezed system $\{y_1,y_2\}$
   and $E_2$ the exceptional divisor.
   By definition, it means that 
    $A_{2\ok}\to A_{1\ok}$ is squeezed weighted blow-up with respect to the squeezed system $\{y_1,y_2\}$
    and $\oe_2$ be the exceptional divisor corresponding to $E_2$.

     If $\oe=\oe_2$, then we have $E=E_2$ and  we are done.
     So, we may assume that the center of $\oe$ on $A_{2\ok}$ is a point.
     Then the center $Q\in A_{2\ok}$ is not on the proper transform of $\oe_1$ on $A_{2\ok}$.
     This is proved as follows:
     
     Let $w=(r,s)$ be the weight of the  squeezed system $\{y_1,y_2\}$ on $A_{1}$.
        
          First, we show that $r=s$ does not happen.
     Assume $r=s$, {\it i.e.,} $w=(1,1)$, then we can take an expression $Q=(a,b)$  of $Q\in \oe_2=\bP_\ok^1$ by homogeneous coordinates with
     $a, b\neq 0$.
     Let $z:=by_1-ay_2\in \o_{A_{1\ok}}$.
     As $Q$ is the center of $\oe$ on $\oe_2\subset A_{2\ok}$ and satisfying $bY_1-aY_2=0$ 
     ($Y_1,Y_2$ are
     the homogeneous coordinates on $E_2=\bP_K^1$ corresponding to $y_1,y_2$.), it follows 
     $$z\in \m_Q\setminus \m_Q^2, \ \ \ \mbox{and}\ \ \ v_E(z)>v_E(y_1), v_E(y_2),$$
       which is a contradiction to the fact that $\{y_1,y_2\}$ is a  squeezed system.
      Now, we may assume that $r<s$. 
      Let $h=0$ be the defining equation of $E_1$ in $A_1$ around $P$, then $\oe_1$ is also defined by $h=0$
      and it is smooth at the closed point $0\in A_{1\ok}$.
      Therefore, we have $\ord_{y_1,y_2}h=1$.
      Then the initial part  of $h$ with respect to $w$ is one of the following:
      
      (1) $\ini_w(h)=y_1$,\ \ \ (2) $\ini_w(h)=y_2$, \ \ \  (3) $\ini_w(h)=y_2+a{ y_1}^d$  ( $a\in \ok$, $w_1d=w_2$).
      In the first two cases, $\oe'_1\mid_{\oe_2}$ is in the zero locus of the coordinate functions,
      where $\oe'_1$ is the proper transform of $\oe_1$ on $A_{2\ok}$.
      Therefore it does not contain the center $Q$ of $\oe$ by Lemma \ref{2nd}.
      In case (3), it follows $w=(1,d)$.
      If  $Q$ is in $\oe'_1\mid_{\oe_2}$, then we have $y'_2:=y_2+a{ y_1}^d\in \m_{A_{1\ok},0}\setminus \m_{A_{1\ok},0}^2$ and
      $v_\oe(y'_2)>v_\oe(y_2)$ which is a contradiction to the assumption that $\{y_1,y_2\}$ is a  squeezed system.
         Now, in any case we obtain that $Q\not\in \oe'_1$.

  On the other hand, $a(E; A,\a)$ has another expression as follows:
   $$a(E; A,\a)=k_{E/A_1}+k_{E_1/A}\cdot v_E(E_1)+1-v_E(\a).$$  

 It is sufficient to show that
  $$a(\oe; A, \a)\geq a(\oe_2;A,\a).$$
  Assume contrary, then
  \begin{equation}\label{Qnot}
  0> \oa(\oe; A, \a)- \oa(\oe_2;A,\a)=a(\oe; A_{2\ok},I_{\oe_2}\cdot\a_{A_{2\ok}})+(v_{\oe}(\oe_2)-1)\cdot\oa(\oe_2; A,\a),
  \end{equation}
  where $\a_{A_{2\ok}}$ is the weak transform of $\a_{A_1}\o_{A_{1\ok}}$.
  For the calculation of (\ref{Qnot}), we used
  \begin{enumerate}
 \item[$\bullet$] $v_\oe(\oe_1)=v_\oe(\oe_2)v_{\oe_2}(\oe_1)+v_\oe(\oe'_1)=v_\oe(\oe_2)v_{\oe_2}(\oe_1)$.
\end{enumerate}
    Then the inequality (\ref{Qnot}) shows that 
 $a(\oe; A_{2\ok},I_{\oe_2}\cdot\a_{A_{2\ok}})<0$ which implies 
   $$\mld(Q; A_{2\ok},I_{\oe_2}\cdot\a_{A_{2\ok}})=-\infty.$$
   Then, by Inversion of Adjunction  (\cite{EM}, \cite{mj-p}), it follows
   $$\mld(Q; \oe_2, \a_{A_{2\ok}}\cdot\o_{\oe_2})<0$$
   which yields $\ord_Q((\a_{A_1}\o_{A_{1\ok}})_{A_{2\ok}}\cdot\o_{\oe_2})=\ord_Q(\a_{A_{2\ok}}\cdot\o_{\oe_2})>1$

Let $(r,s)$ be the  squeezed weight for $\oe$ at the closed point $0\in A_{1\ok}$, then 
$$a(\oe, A_{1\ok},\a_{A_{1\ok}})=a(E; A_1, \a_{A_1})\geq0,$$
where the last inequality follows from (\ref{a1}).
   Now we reach the situation in Theorem \ref{prototype} and apply the
   argument in (\cite{kawk1}) for the surface pair $(A_{1\ok},\a_{A_{1\ok}}) $, 
we obtain
\begin{equation}\label{rs}
1<\ord_Q((\a_{A_1}\o_{A_{1\ok}})_{A_{2\ok}}\cdot\o_{\oe_2})\leq \frac{v_{\oe_2}(\a_{A_1}\o_{A_{1\ok}})}{r\cdot s}\leq \frac{r+s}{r\cdot s},
\end{equation}
where note that $\a_{A_{2\ok}}=(\a_{A_1}\o_{A_{1\ok}})_{A_{2\ok}}$  
and
the third inequality follows from 
$$r+s-v_{\oe_2}(\a_{A_1}\o_{A_{1\ok}})=
a(\oe_2; A_{1\ok}, \a_{A_1})= a(E_2; A_1, \a_{A_1})\geq 0$$ by (\ref{a1}).
       The possible positive intergers $\{r,s\}$ satisfying (\ref{rs}) with $\gcd(r,s)=1$ are only $\{1,s\}$.
       In this case let $z':= y_1^s - cy_2$, where $Q=(c,1)\in \oe_2=\bP(1,s)$, then $v_\oe(z')> v_\oe(y_2)$, which is a contradiction to 
that $\{y_1, y_2\}$ is a  squeezed system for $\oe$.
Hence we obtain  
$$\oa(\oe; A, \a)\geq \oa(\oe_2;A,\a),$$
which completes the proof of the theorem for Case 1.

{\bf Case 2. $\dim\overline{\{ P\}}=0$}

   Since we are assuming  $0\leq a(E;A,\a)<a(E_1; A,\a)$, by Lemma \ref{E1} only possibility of $v'$ is $(2,2,3)$
   and we have $0\leq a(E_1;A,\a)<1$.


  
   Now take a  squeezed blow-up $A_2 \to A_1$ of weight $w=(w_1,w_2,w_3)$ at $P$ and let $E_2$ be the exceptional divisor.
   We may assume that the condition (2) in Definition \ref{defofgeneral} holds.
   Let $Q\in E_2$ be the center of $E$ on $A_2$.


      Let $E'_1$ be the proper transform  of $E_1$ on $A_2$.
      Denote the defining ideals of $E'_1$ and $E_2$ in $A_2$ by $I_{ E'_1}$ and $I_{E_2}$, respectively.
     
      Then, we have  the similar expansion of $a(E;A,\a)$ as in (\ref{expression}) as follows:
\begin{equation}\label{noE1}
      a(E; A,\a)=  a(E; A_2, I_{E'_1}\cdot I_{E_2}\cdot\a_{A_2}) +v_E(E_2)a(E_2; A, \a)+v_E(E'_1)a(E_1; A, \a),      
\end{equation}      
    where $\a_{A_2}$ is the weak transform of $\a$ on $A_2$ and is also the weak transform of $\a_{A_1}$ on $A_2$.

 {\bf Case 2.1.}    $\dim\overline{\{Q\}}=0$:
   
   We will prove $a(E_2;A,\a)\leq a(E;A,\a)$.
   Assume on the contrary that $a(E_2;A,\a)> a(E;A,\a)$.
   Then, by (\ref{noE1}), we obtain 
   \begin{equation}\label{IA}
   a(E; A_2, I_{E'_1}\cdot I_{E_2}\cdot\a_{A_2}) <0
 \end{equation}
  It implies that  $\mld(Q; A_2,   I_{E'_1}\cdot I_{E_2}\cdot\a_{A_2})=-\infty$.
  Let $ L:=  E'_1\cap E_2$, by Inversion of Adjunction, we obtain  
   $$\mld(Q;E_2, I_L\a_{A_2}\o_{E_2})<0.$$
    Let $B'$ be the bad curve on $E_2$ (note that a bad curve exists in our case by Lemma \ref{non-exist-bad}).
    Then, we obtain  
    \begin{equation}\label{case2.1}
    \ord_{B'}\a_{A_2}\o_{E_2}\leq1.
    \end{equation}
   Indeed,
   when $L=B'$, then generality of $\a$ implies that $\ord_{B'}\a_{A_2}\o_{E_2}=0$, as $\ord_{B'}I_L=1$.
    On the other hand, when $L\neq B'$, then $Q\not\in L$ and therefore generality implies 
    $\ord_{B'}\a_{A_2}\o_{E_2}\leq1$.
  Now, in the same way as Case 2 in the proof of Lemma \ref{E1}, we obtain that the weight of the second
   squeezed blow-up is $(2,2,3)$.

   We will show a contradiction under this situation.  
    In this case, we have
 \begin{equation}\label{6}  
   v_{E_2}(\a_{A_1})>6, \ \ \mbox{as\ well\ as\ \ \ }v_{E_1}(\a)>6,
\end{equation}   
  by applying (i) of Lemma \ref{E1} for $(A_1, \a_{A_1})$, $ E_2$ with the weight  $w=(2,2,3)$
  and also for $(A, \a)$, $ E_1$ with the weight $v'=(2,2,3)$. 
   As the  squeezed system $\{y_1,y_2,y_3\}$ at $P\in A_1$ has weight $(2,2,3)$, it follows 
   $v_{E_2}(f)\leq 3\cdot \ord_P f$ for every $f\in \a_{A_1}$. 
   Therefore we obtain
   \begin{equation}\label{v&ord}
   v_{E_2}(\a_{A_1})\leq 3\cdot \ord_P\a_{A_1}      \leq 3\cdot \ord_P\a_{A_1}\o_{E_1}.
   \end{equation}

   On the other hand, applying Lemma \ref{bezout} to $E_1=\bP(2,2,3)$ and a general element of $\a_{A_1}\cdot\o_{E_1}$,
   we obtain $1<\ord_P\a_{A_1}\o_{E_1}\leq \frac{v_{E_1}(\a)}{2\cdot 3}$. Note that the first inequality follows from
   Lemma \ref{ord}.

   Then, it follows
\begin{equation}\label{7}
   7=2+2+3=k_{E_1}+1\geq v_{E_1}(\a)\geq 6\cdot \ord_P\a_{A_1}\o_{E_1}.
\end{equation}   
   Using   (\ref{6}), (\ref{v&ord}) and (\ref{7})
   we obtain
   $$\frac{7}{2}>3\cdot \ord_P\a_{A_1}\o_{E_1}\geq v_{E_2}(\a_{A_1})>6$$
   which is a contradiction. 
   Therefore $a(E_2;A,\a)\leq a(E;A,\a)$ holds.
 
 {\bf Case 2.2.}  $\dim \overline{\{Q\}}=1$.
 
   In the following, we will prove  $a(E_2;A,\a)\leq a(E;A,\a)$.
 Assume contrary,  $a(E_2;A,\a)>a(E;A,\a)$.
 The curve $\overline{\{Q\}}$ is not a bad curve, because if it is, then
   $$-\infty=\mld(Q;A_2, I_{E'_1}\cdot I_{E_2}\cdot\a_{A_2})=\mld(Q; E_2, I_L\a_{A_2}\o_{E_2})$$ 
   implies 
   $\ord_QI_L\a_{A_2}\o_{E_2}>1$, while the generality of $\a$ implies the converse inequality
   $\ord_QI_L\a_{A_2}\o_{E_2}=\ord_{B'}I_L\a_{A_2}\o_{E_2}\leq 1$.
   We also have $\overline{\{Q\}}\neq L$. 
   This is proved as follows: 
   
   Let $h'\in \o_{A_1}$ define $E_1$ around $P$.
   As $P$ is smooth on $E_1$ and also on $A_1$, we have $\ord h'=1$ with respect to RSP $\{y_1,y_2,y_3\}$ of $\o_{A_1}$
   at $P$. 
   Then, considering of the initial term of $h'$ with respect to the weight $w$, we see that one of the following holds:
\begin{enumerate}
   \item[(1)]     $L$ is a coordinate axis of $E_2=\bP(w)$;
   \item[(2)]  $L$ is defined by $Y_1+aY_2$  $(a\in k)$ in $E_2$;
   \item[(3)]  $L$ is defined by $Y_3+f(Y_1,Y_2)$ in $E_2$, where $f$ is a homogeneous polynomial of degree $d$.
\end{enumerate}
    In the third case, the weight $w$ must be $(1,1,d)$.
    In this case, if $\overline{\{Q\}}= L$,  it follows $y'_3:=y_3+f(y_1,y_2)\in \m_{A_1,P}\setminus \m_{A_1,P}^2$ and 
    $v_E(y'_3)> v_E(y_3)$, which is a contradiction to the maximality of $v_E(y_3)$.
    In case (1), $\overline{\{Q\}}\neq L$ because $Q$ is not contained in the coordinate axes (Lemma \ref{2nd}).
    In case (2), $L$ becomes the bad curve, therefore $\overline{\{Q\}}\neq L$, because $\overline{\{Q\}}$ is not the bad curve, 
    as we saw above.

  Now we obtain $Q\not\in E'_1\cap E_2$.  
  By using  this, we have 
  $$\mld(Q;A_2, I_{E_2}\cdot\a_{A_2})=\mld(Q;A_2, I_{E'_1}\cdot I_{E_2}\cdot\a_{A_2})=-\infty.$$
  By Inversion of Adjunction, we have 
  $$\mld(Q;E_2,\a_{A_2}\o_{E_2})=-\infty.$$
  Then, we have $1< \ord_Q\a_{A_2}\cdot\o_{E_2}$

First we show that the  squeezed weight $w=(r,r,s)$ for $E$ at $P\in A_1$ is $(1,1,n)$ for $n\in \bN$.
Let $C:=\overline{\{Q\}}$ be defined by $\ell=0$ in $E_2=\bP(r,r,s)$.
If $w\neq (1,1,n)$, then the other possible weight $w$ is $(2,2,3)$.
    In this case 
the smallest possible value for the  degree of $\ell$  on $\bP(2,2,3)$ with respect to $w$ is $6$.
Therefore, by $1< \ord_Q\a_{A_2}\cdot\o_{E_2}$, 
$$v_{E_2}(\a_{A_1})\geq \deg_w\ell\cdot\ord_Q(\a_{A_1})_{A_2}\geq 6\cdot\ord_Q(\a_{A_1})_{A_2}>6$$
Now we obtain the inequality (\ref{6}).
The inequalities (\ref{v&ord}) and (\ref{7}) also hold in the present case.
Therefore, we induce a contradiction and $w$ must be $(1,1,n)$. 
By Lemma \ref{E1}, $deg_{w}\ell=1+n$.

Let $\{y_1,y_2,y_3\} $ be a  squeezed system at $P\in A_1$ with the weight $(1,1,n)$.
Let $\{Y_1,Y_2,Y_3\}$ be the homogeneous coordinates of $E_2=\bP(1,1,n)$ corresponding to $\{y_1,y_2,y_3\} $.
As $\ell$ is irreducible of degree $1+n$ with respect to the weight $(1,1,n)$,
 we can express
 $$\ell=Y_1Y_3-Y_2^{n+1}.$$
 For simplicity, assume $\a=\a_1^{e_1}$ and take a general element $f\in  \a_1\o_{A, 0}\subset k[[x_1,x_2,x_3]]$, 
 where $\{x_1,x_2,x_3\}$ is a  squeezed system for $E$ at $0\in A$ of weight $(2,2,3)$.
 Then the weak transform $f_{A_1}$ of $f$ on $A_1$ is written as
 \begin{equation}\label{fA1}
 f_{A_1}=(y_1\cdot y_3-y_2^{n+1})^r\cdot \ell' + g(y),
 \end{equation}
 where $\ell'$ is weighted homogeneous  and $g(y)$ is the  term with the higher weight with respect to the weight $w=(1,1,n)$.
 
 Here, we may assume that $P=(1,1,1)\in E_1=\bP(2,2,3)$,
 then we can take a RSP at $P\in A_1$ by making use of the  squeezed system $\{x_1,x_2,x_3\}$ of  squeezed weight 
 $(2,2,3)$ which gives the first weighted blow-up $\varphi_1:A_1\to A$:
 $$z_1=\frac{x_1^3-x_3^2}{x_3^2}, \ \ \ z_2=\frac{x_2^3-x_3^2}{x_3^2}, \ \ \ z_3=x_3,$$
 where $x_3 $ defines $E_1$ in the neighborhood of $P$.
Take the minimal  $m\in \bN$ such that
\begin{equation}\label{erasedenominator}
f=x_3^m\cdot f_{A_1}\in \o_{A,0}\subset k[[x_1,x_2,x_3]].
\end{equation}
We note that for $m\geq 2$, 
\begin{equation}\label{denominator}
   \ord_0 x_3^m\cdot z_i=m\ \ (i=1,2),\ \ \ \ \ \ \
   \ord_0 x_3^m\cdot z_3=m+1,
\end{equation}
where $\ord_0$ is the order with respect to the parameters $x_1,x_2,x_3 $ in $\o_{A,0}$.
Then, by (\ref{denominator}), $$\ord_0 f=\ord_0 (x_3^m\cdot f_A)\geq m.$$

On the other hand if $x_3^s(y_1y_3-y_2^{n+1})^r\in \o_{A,0}$, it should be $s\geq 4r$.
In fact, if a quadratic monomial  $z_iz_j$ $(i, j\in\{1,2\})$ appears in $y_1y_3$ which is expressed as a function of $z_1,z_2,z_3$,
then $s\geq 4r$. 
If such a monomial $z_iz_j$ $(i, j\in\{1,2\})$ does not appear in $y_1y_3$,  then $z_i$ $(i<3)$ appears in $y_2$,
     because  $\{z_1,z_2,z_3\}$ and 
$\{y_1,y_2,y_3\}$ are both RSP at $P\in A_1$.
This yields $s\geq 2(n+1)r\geq4r$.

       Consider the initial part $(y_1\cdot y_3-y_2^{n+1})^r\cdot \ell'$ of $f_{A_1}$ with respect to the weight $w=(1,1,n)$.
       We know that $a(E_2; A_1,\a_{A_1})\geq 0$, therefore $v_{E_2}(f_{A_1}^{e_1})=v_{E_2}(\a_{A_1}^{e_1})
       \leq k_{E_2/A_1}+1=n+2$.
       Then, it follows that 
\begin{equation}\label{e2}
       e_1(r(n+1)+\deg_{w}\ell')\leq n+2.
\end{equation}       
       As $1<\ord_Q \a_{A_2}\o_{E_2}$,  it follows $1<\ord_Q (y_1y_3-y_2^{n+1})^{re_1}$ which yields $re_1>1$.
     By this and (\ref{e2}), we have $\deg_{w}\ell'<r$, therefore $\ord_P \ell'< r$ which yields that the factor of $z_3(=x_3)$
     appears in $\ell'$ at most $r-1$ times.
     Hence, as (\ref{erasedenominator}) the inclusion $x_3^m (y_1\cdot y_3-y_2^{n+1})^r\cdot \ell'   \in \o_{A,0}$
     should hold, which implies
     $m\geq 4r-(r-1)=3r+1$.

       Then, 
       $\ord_0 f=\ord_0 (x_3^m\cdot f_{A_1})\geq 3r+1,$ and therefore, taking $e_1r>1$ into account, we have
	$$\ord_0\a_1^{e_1}=\ord_0 f^{e_1}\geq  e_1(3r+1)>3.$$
	Then, for every prime divisor $D$ over $A$ with the center at $0$ has the discrepancy 
	$a(D;A,\a)<0$, which is a contradiction to the condition that $a(E; A, \a)\geq 0$.
\ \ \ \ \ \ \ \ \ \   {\bf QED of Theorem \ref{main}.}

    The condition ``general" is necessary as far as we use `` squeezed" blow-ups to construct a required divisor
  in Theorem \ref{main}.
  Actually, we have a non-general ideal such that two  squeezed blow-ups do not give the required divisor.
   \begin{exmp}\label{3times}
    Let $f=(x_1-x_2)^2+x_3^2+x_1^4\in k[x_1,x_2,x_3]$, $e=6/5$ and $\a=(f)^e$. 
   Define $E$ as follows:
  \newline 
   g $\varphi_1:A_1\to A$ be the weighted blow-up with weight $(1,1,2)$ with respect to the coordinates $\{x_1,x_2,x_3\}$.
   Let $E_1$ be the exceptional divisor of $\varphi_1$.
   Let $\varphi_2:A_2\to A_1$ be the (usual) blow-up with the center at $E_1\cap (f_{A_1}=0)$, where $(f_{A_1})$ is the 
   weak transform of $(f)$ on $A_1$.
   Let $E_2$ be the exceptional divisor of $\varphi_2$.
   Let $\varphi_3:\widetilde A\to A_2$ be the (usual) blow-up with the center at $E_2\cap (f_{A_2}=0)$, where $(f_{A_2})$ is the 
   weak transform of $(f)$ on $A_2$.
   Let $E$ be the exceptional divisor of $\varphi_3$.
   Then, $\varphi_1$ and $\varphi_2$ are  squeezed blow-ups for $E$, $\a$ is not general for $E$ and the following hold:
   $$0=a(E;A,\a)<a(E_2;A,\a)=\frac{1}{5}<a(E_1;A,\a)=\frac{3}{5}.$$
   So, we can see that the  squeezed blow-ups do not work for this ideal.
   But if we do not stick to  squeezed blow-up, we can find two weighted blow-ups to obtain the required $F$ in the theorem.
   Let $\{x'_1, x'_2,x'_3\}$ be another  RSP defined by $x'_i=x_i$ $(i=1,3)$ and $x'_2=x_1-x_2$.
   Then, $v_E(x'_1)=1,v_E( x'_2)=2,v_E(x'_3)=2$. (We can see that this RSP is not  squeezed.)
   Now, let $\psi_1:A'_1\to A$ be the weighted blow-up with weight $(1,2,2)$ with respect to  $\{x'_1, x'_2,x'_3\}$.
   Let $E'_1$ be the exceptional divisor of $\psi_1$.
   Let $\psi_2:A'_2\to A'_1$ be the (usual) blow-up with the center at $E'_1\cap (f_{A'_1}=0)$.
   Let $E'_2$ be the exceptional divisor of $\psi_2$.
   Then, we can see that $E=E_2$ at the generic points.
   So, $E$ itself is obtained by two weighted blow-ups.
   \end{exmp}
   
   The example suggests us that we may take an appropriate weighted blow-up to obtain the required $F$ in the theorem,
   if $\a$ is not general.

 \begin{cor}[Corollary \ref{maincor}] 
 Assume $N=3$.
    Then, for every ``general"  pair $(A,\a)$, the minimal log discrepancy $\mld(0;A,\a)$ is
    computed by a prime divisor $E$ obtained by at most two weighted blow-ups. 
    More concretely, the blow-ups are  squeezed blow-ups for $E$.
 \end{cor}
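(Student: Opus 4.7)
The plan is to deduce Corollary \ref{maincor} directly from Theorem \ref{main}. Unpacking Definition \ref{defofgeneral}, a general pair $(A,\a)$ comes equipped with a prime divisor $E$ over $(A,0)$ that computes $\mld(0;A,\a)$ and for which $\a$ is general in the sense of Definition \ref{defofgeneral}. I would fix such an $E$ and then apply Theorem \ref{main} to it, obtaining the required $F$ more or less by bookkeeping.

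Provided $a(E; A, \a) \geq 0$, Theorem \ref{main} supplies a prime divisor $F$ obtained by at most two standard weighted blow-ups with $a(F; A, \a) \leq a(E; A, \a)$. Since $F$ has its center at $0$, the defining infimum for the mld gives $a(F; A, \a) \geq \mld(0; A, \a) = a(E; A, \a)$. Combining these two inequalities forces equality, so $F$ is a prime divisor obtained by at most two standard weighted blow-ups that computes $\mld(0;A,\a)$, which is exactly the claim. For the case $\mld(0;A,\a) = -\infty$, I would replace $\a$ by $\a + \m_0^s$ for $s \gg 0$, as in the remark following the definition of mld: this produces a computing divisor $E$ on a log resolution with $a(E; A, \a) < 0$, and the argument above yields $F$ with $a(F; A, \a) \leq a(E; A, \a) < 0$, so $F$ also computes $\mld(0;A,\a) = -\infty$ for the original pair.

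The main obstacle, minor as it is, is verifying that the replacement $\a \leadsto \a + \m_0^s$ preserves the generality conditions of Definition \ref{defofgeneral} for the chosen $E$. This should go through because for $s$ sufficiently large, $\m_0^s$ contributes trivially to the weak transforms along the exceptional divisors $E_1$ and $E_2$ produced by the two standard blow-ups determined by $E$, so the order bounds $\ord_B \a_{A_1}\o_{E_1}\leq 1$ and $\ord_{B'} I_L \a_{A_2}\o_{E_2}\leq 1$ are unaffected. Once this bookkeeping is in place, the corollary reduces to a one-line application of Theorem \ref{main}.
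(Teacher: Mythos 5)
Your treatment of the case $\mld(0;A,\a)\geq 0$ is correct and is exactly the paper's argument: apply Theorem \ref{main} to a divisor $E$ computing the mld and squeeze $a(F;A,\a)$ between $\mld(0;A,\a)$ and $a(E;A,\a)$.

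The case $\mld(0;A,\a)=-\infty$ has a genuine gap. Theorem \ref{main} only guarantees $a(E;A,\a)\geq a(F;A,\a)$ for real ideals with $a(E;A,\a)\geq 0$, and this hypothesis is used essentially throughout its proof (it enters Lemma \ref{ord}, the inequality (\ref{a1}), and the final contradiction in Case 2.2). Once you have a divisor $E$ with $a(E;A,\a)<0$, you cannot invoke ``the argument above'': the theorem simply does not apply to that pair. Your substitution $\a\leadsto\a+\m_0^s$ does not repair this; in the paper that substitution only serves to guarantee that a divisor computing the mld exists on a log resolution, and it leaves the log discrepancy of $E$ negative, so the hypothesis of Theorem \ref{main} still fails. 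The paper's fix is different: since $\a$ is a real ideal, one rescales the exponent, choosing $t\in(0,1)$ with $a(E;A,\a^t)=k_E+1-t\,v_E(\a)=0$. The rescaled ideal $\a^t$ is still general for $E$ (the order conditions of Definition \ref{defofgeneral} only improve when the exponents shrink), so Theorem \ref{main} applies to $(E,\a^t)$ and yields $F$, obtained by at most two standard blow-ups, with $a(F;A,\a^t)\leq a(E;A,\a^t)=0$. This gives $v_F(\a)\geq (k_F+1)/t>k_F+1$, hence $a(F;A,\a)<0$, so $F$ computes $\mld(0;A,\a)=-\infty$. You should replace your $-\infty$ case with this rescaling argument.
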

 
 \begin{proof}
When $\mld(0;A,\a)\geq 0$, then apply the theorem for a divisor $E$ computing the mld.
     When $\mld(0;A,\a)=-\infty$, then in a similar way as in \cite{kawk1}, take a prime divisor $E$ computing 
     the mld.
     Then by taking a positive real number $t<1$ such that $a(E; A,\a^t)=0$ and apply Theorem \ref{main}.
 \end{proof}
 
 \begin{cor}\label{nobadcurve}
   Let $E$ be a prime divisor over $A$ with the center at $0$ and $E_1=\bP(r,r,s)$ $(r,s\geq 1)$ the exceptional divisor of
   a squeezed blow-up for $E$.
   Assume that $a(E;A,\a)\geq 0$ and the center of $E$ on $E_1$ is a curve of degree $>r$, then 
   there is a prime divisor $F$ such that 
   $$a(F; A,\a)\leq a(E;A,\a)$$
   holds for every $\bR$-ideal $\a$ 
   and $F$ is obtained by at most two weighted blow-ups.
 \end{cor}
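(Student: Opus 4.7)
The plan is to reduce to Case 1 of the proof of Theorem \ref{main} and observe that, under the corollary's hypothesis, no generality assumption on $\a$ is required. Indeed, the assumption that the center of $E$ on $E_1$ is a curve of degree greater than $r$ places us in case (a) (if $r=s=1$) or case (b) (if $r<s$) of Lemma \ref{non-exist-bad}(i), so there is no bad curve on $E_1$. Consequently, condition (1) in Definition \ref{defofgeneral} is vacuous and is automatically satisfied by every real ideal $\a$.

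Given $\a$ with $a(E;A,\a)\ge 0$, I split into two cases. If $a(E_1;A,\a)\le a(E;A,\a)$, I set $F:=E_1$ and the corollary holds with one weighted blow-up. Otherwise $a(E_1;A,\a)>a(E;A,\a)\ge 0$, and since the center of $E$ on $A_1$ is one-dimensional we sit exactly in Case 1 of the proof of Theorem \ref{main}. Following that argument I choose a standard system $\{y_1,y_2\}$ for $E$ in the two-dimensional regular local ring $\o_{A_1,P}$ at the generic point $P$ of the center curve, perform the standard blow-up $\varphi_2:A_2\to A_1$ at $P$ with respect to this system, and set $F:=E_2$ to be the resulting exceptional divisor. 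Both $P$ and $\{y_1,y_2\}$ depend only on $E$, so $F$ is a divisor obtained by at most two weighted blow-ups determined by $E$ alone.

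The inequality $a(E;A,\a)\ge a(E_2;A,\a)$ is then supplied verbatim by the remainder of Case 1 of Theorem \ref{main}: Lemma \ref{ext.standard} shows that $\{y_1,y_2\}$ is also a standard system for the canonical lift $\oe$ over $A_{1\ok}$ (Corollary \ref{correspondence2}), the discrepancy comparison is rewritten through the formula \eqref{oa}, and the decisive surface inequality is extracted by applying Theorem \ref{prototype} to $(A_{1\ok},\a_{A_{1\ok}})$ together with Inversion of Adjunction.

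The main technical obstacle is the careful audit of Case 1 of the proof of Theorem \ref{main}: I must verify that the generality condition (2) of Definition \ref{defofgeneral} enters only in Case 2 there (when the second-stage center is closed), and that the surface-theoretic input from Theorem \ref{prototype} does not secretly rely on any further genericity. Once that audit is complete the corollary follows, and the construction of $F$ depends only on $E$, in the same spirit as Theorem \ref{prototype}.
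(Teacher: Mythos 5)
Your proposal is correct and follows the paper's own (very short) argument: since the center of $E$ on $E_1$ is a curve of degree $>r$, Lemma \ref{non-exist-bad} shows there is no bad curve, hence every real ideal is general for $E$ and Theorem \ref{main} applies. Your additional observation that condition (2) of Definition \ref{defofgeneral} is also vacuous because the second-stage center is non-closed (so only Case 1 of the theorem's proof is ever invoked) is a useful clarification but not a departure from the paper's route.
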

 
 \begin{proof}  We can see that there is no bad curve on $E$. 
     Therefore, every $\bR$-ideal $\a$ is general for $E$.
 \end{proof}

 The proof of the theorem shows also the following corollary.
 
\begin{cor} Let $E$ be a prime divisor over $A$ with the center at $0$ computing $\mld(0;A,\a)\geq 0$.
   Let $E'$ be the exceptional divisor of a weighted blow-up with weight $v:=(r,s,t)$, where $\gcd(r,s,t)=1$.
   Assume that the center $C$ of $E$ on $E'$ is a curve of degree $d\geq r+s+t-1$
   If $\mld(0;A,\a)$ is not computed by $E'$,
   then the mld is computed by the divisor obtained by one additional weighted blow-up at $C$.
\end{cor}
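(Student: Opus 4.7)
The plan is to adapt Case~1 of the proof of Theorem~\ref{main} to the present, more general, weight $(r,s,t)$. Write $\varphi_1:A_1\to A$ for the given weighted blow-up producing $E'$, and let $P\in E'\subset A_1$ be the generic point of $C$, so $\dim\overline{\{P\}}=1$. Since the mld is not computed by $E'$ we have $a(E';A,\a)>a(E;A,\a)\geq 0$, so Lemma~\ref{ord} applies at $P$ and forces a general element of $\a$ to restrict to a weighted divisor on $E'=\bP(r,s,t)$ whose coefficient along $C$ is some $\alpha>1$. Taking $(r,s,t)$-degrees,
\[
v_{E'}(\a)\;>\;\deg_{(r,s,t)}C\;=\;d\;\geq\;r+s+t-1,
\]
while $a(E';A,\a)=r+s+t-v_{E'}(\a)\geq a(E;A,\a)\geq 0$ forces $v_{E'}(\a)\leq r+s+t$. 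Hence $0\leq a(E';A,\a)<1$; in particular $a(E';A,\a)-1<0$.

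I would then choose a standard system $\{y_1,y_2\}$ for $E$ at $P$ with standard weight $(r',s')$, $\gcd(r',s')=1$, and let $\varphi_2:A_2\to A_1$ be the corresponding standard weighted blow-up at $P$ with exceptional divisor $F$; this is well-defined because Lemma~\ref{2nd} guarantees that $P$ is a smooth point of $A_1$ lying outside the coordinate hyperplanes of $E'$. The goal is $a(F;A,\a)\leq a(E;A,\a)$, and I would assume the reverse strict inequality for contradiction. Following the reduction used in Case~1 of Theorem~\ref{main}, I would pass to the completion at $P$ and to the algebraic closure $\ok$ of the residue field, and work with the prime divisors $\oe,\of$ over $A_{1\ok}$ provided by Corollary~\ref{correspondence2}. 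Lemma~\ref{ext.standard} guarantees that $\{y_1,y_2\}$ remains a standard system for $\oe$, so $\of$ is the exceptional divisor of the standard blow-up of $A_{1\ok}$ with respect to $\{y_1,y_2\}$. The key expansion
\[
\oa(\oe;A,\a)-\oa(\of;A,\a)\;=\;a\bigl(\oe;A_{2\ok},I_{\of}\a_{A_{2\ok}}\bigr)+(v_{\oe}(\of)-1)\,\oa(\of;A,\a)\;<\;0
\]
forces $a(\oe;A_{2\ok},I_{\of}\a_{A_{2\ok}})<0$, and Inversion of Adjunction then yields $\ord_Q\a_{A_{2\ok}}\o_{\of}>1$ at the center $Q$ of $\oe$ on $\of$.

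To close the argument, I would apply Theorem~\ref{prototype} together with Lemma~\ref{bezout} on the smooth surface $(A_{1\ok},\a_{A_{1\ok}})$. The hypothesis $a(F;A_1,\a_{A_1})\geq 0$ needed to run the surface argument follows from the identity $a(F;A,\a)=a(F;A_1,\a_{A_1})+v_F(E')\,(a(E';A,\a)-1)$ together with $a(E';A,\a)<1$ and $a(F;A,\a)\geq 0$, and one obtains the chain
\[
1\;<\;\ord_Q\a_{A_{2\ok}}\o_{\of}\;\leq\;\frac{v_F(\a_{A_1})}{r's'}\;\leq\;\frac{r'+s'}{r's'}.
\]
Thus $(r'-1)(s'-1)<1$, so $r'=1$ or $s'=1$; say $r'=1$. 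Then with $Q=(c,1)\in\of=\bP(1,s')$, the element $z:=y_1^{s'}-cy_2$ lies in $\m_{A_1,P}\setminus\m_{A_1,P}^2$ and satisfies $v_{\oe}(z)>v_{\oe}(y_2)=s'$, contradicting the maximality of $v_E(y_2)$ that defines $\{y_1,y_2\}$ as a standard system; the borderline case $(r',s')=(1,1)$ is dispatched by the analogous linear combination used at the end of Case~1 of Theorem~\ref{main}. The main obstacle is exactly this bookkeeping in the surface reduction --- making sure that all the discrepancy identities and the standard-system property survive the completion and the base change to $\ok$ --- but once that machinery is in place, the surface inequality above and the standard-system contradiction close the argument, forcing $a(F;A,\a)\leq a(E;A,\a)$ and hence that $F$ computes $\mld(0;A,\a)$.
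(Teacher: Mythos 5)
Your proposal is correct and follows essentially the same route as the paper: the author likewise uses Lemma \ref{ord} to get $v_{E'}(\a)>d\geq r+s+t-1$, hence $a(E';A,\a)<1$, and then simply invokes Case 1 of the proof of Theorem \ref{main}, which is exactly the surface reduction you spell out. The only difference is that you have written out the Case 1 machinery (completion, base change to $\ok$, the $\oa$ identity, and the $(r'+s')/(r's')$ bound) explicitly rather than citing it.
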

 
 \begin{proof} Let $A'\to A $ be the weighted blow-up with weight $(r,s,t)$.
   By the assumption, we have $a(E;A,\a)< a(E'; A,\a)$.
   Then, by  Lemma \ref{ord}, we have $\alpha:=\ord_P\a_{A'}\o_{E'}>1$, where $P$ is the generic point of $C$.
   Therefore, we obtain $v_{E'}(\a)=\alpha d> r+s+t-1$, and therefore $a(E';A,\a)<1$.
   Now, in the same way as Case 1 in the proof of Theorem \ref{main}, we obtain that the  squeezed blow-up at $P$
   gives a divisor $F$ satisfying $a(F;A,\a)\leq a(E;A,\a)=\mld(0;A,\a)$.  
  \end{proof}
 
 The following is a special case of the corollary above. 
 Example \ref{not1} is in this case.
 \begin{cor}[Corollary \ref{usual}]\label{111}
   Let $E$ be a prime divisor over $A$ with the center at $0$ computing $\mld(0;A,\a)\geq 0$.
   Let $E'$ be the exceptional divisor of the usual blow-up with the center at $0$.
   Assume that the center $C$ of $E$ on $E'$ is a curve of degree $d\geq 2$
   Then, $\mld(0;A,\a)$ is computed by the divisor obtained by one additional weighted blow-up at $C$. 
 \end{cor}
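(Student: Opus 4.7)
The plan is to derive this corollary directly from the preceding one by specializing the weights to $(r, s, t) = (1, 1, 1)$. Under this specialization the weighted blow-up coincides with the usual blow-up of $A$ at $0$, its exceptional divisor is $E' \cong \bP^2$, and the degree threshold $r + s + t - 1$ simplifies to exactly $2$, which matches the hypothesis $d \geq 2$ on $\deg C$.

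In the typical case where $\mld(0; A, \a)$ is not already computed by $E'$---equivalently, $a(E; A, \a) < a(E'; A, \a)$---the preceding corollary applies verbatim and produces a standard blow-up at the generic point $P$ of $C$ whose exceptional divisor $F$ satisfies $a(F; A, \a) \leq a(E; A, \a) = \mld(0; A, \a)$; hence $F$ computes the mld.

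If instead $\mld$ also happens to be computed by $E'$, so that $a(E; A, \a) = a(E'; A, \a)$, the strict inequality used in the preceding corollary through Lemma \ref{ord} is no longer available. I would handle this by examining where strictness is actually needed: the downstream ingredients required to invoke Case 1 of the proof of Theorem \ref{main}---namely the bounds $a(E'; A, \a) \leq 1$ and $a(E; A_1, \a_{A_1}) \geq 0$---continue to hold in the equality case, the first because $E$ has center on the curve $C \subset E'$ (so $v_E(E') \geq 1$) together with $\mld \geq 0$, and the second from the inequality (\ref{a1}) established in the proof of Theorem \ref{main}. With these in hand, the standard blow-up at $P$, which depends only on the standard system of $E$ and not on $\a$, yields the same divisor $F$ as in the strict case, and the argument of Case 1 delivers $a(F; A, \a) \leq a(E; A, \a) = \mld$.

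The main obstacle is precisely this reconciliation of the equality case with the preceding corollary's reliance on strict inequality; once one confirms that only the weaker inequalities feed into Case 1 of the proof of Theorem \ref{main}, the rest is bookkeeping. Since $F$ is dictated solely by the standard system for $E$, the verification that the same $F$ computes $\mld$ in both cases should be routine.
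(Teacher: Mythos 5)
Your main route is exactly the paper's: Corollary \ref{111} is read off from the preceding corollary by taking $(r,s,t)=(1,1,1)$, so that the weighted blow-up is the usual blow-up and the threshold $r+s+t-1$ becomes $2$; the paper offers nothing beyond this specialization, so in the case $a(E;A,\a)<a(E';A,\a)$ you and the paper agree.

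Your second paragraph addresses a real imprecision --- the preceding corollary assumes $\mld(0;A,\a)$ is not computed by $E'$, while Corollary \ref{111} drops that hypothesis --- but the justification you give for the crucial bound $a(E';A,\a)\leq 1$ in the equality case is not a proof. Knowing $v_E(E')\geq 1$ and $\mld(0;A,\a)\geq 0$ gives no control on $v_{E'}(\a)$, and $a(E';A,\a)=3-v_{E'}(\a)$ could a priori exceed $1$. The bound is true, but it requires the non-strict analogue of Lemma \ref{ord}: from the expansion (\ref{expression}) with $D=E'$ and $a(E;A,\a)=a(E';A,\a)$ one gets $a(E;A_1,I_{E'}\cdot\a_{A_1})=a(E';A,\a)(1-v_E(E'))\leq 0$, hence $\mld(P;E',\a_{A_1}\o_{E'})\leq 0$ by Inversion of Adjunction, hence $\ord_P(\a_{A_1}\o_{E'})\geq 1$, and then $v_{E'}(\a)\geq \deg C\geq 2$ gives $a(E';A,\a)\leq 1$. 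Note also that your second ingredient, inequality (\ref{a1}), is not independent of the first: in the paper it is itself deduced from $a(E_1;A,\a)<1$, so citing it does not close the circle. Once $a(E';A,\a)\leq 1$ is actually established, Case 1 of the proof of Theorem \ref{main} does go through (its inequalities only need $a(E_1;A,\a)-1\leq 0$), so your plan is sound; the genuine gap is the unsupported justification of that first bound.
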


\makeatletter \renewcommand{\@biblabel}[1]{\hfill#1.}\makeatother

\noindent   
Graduate School of Math. Sci., the University of Tokyo, \\
shihokoishii@mac.com/
shihoko@g.ecc.u-tokyo.ac.jp
\end{document}